\title{Provability interpretation of non-normal modal logics having neighborhood semantics}
\author{Haruka Kogure\footnote{Email: kogure@stu.kobe-u.ac.jp}
\footnote{Graduate School of System Informatics, Kobe University, 1-1 Rokkodai, Nada, Kobe 657-8501, Japan.}}
\date{}
\theoremstyle{plain}
\newtheorem{thm}{Theorem}[section]
\newtheorem*{thm*}{Theorem}
\newtheorem{prop}[thm]{Proposition}
\newtheorem{cor}[thm]{Corollary}
\newtheorem*{fact*}{Fact}
\newtheorem{prob}[thm]{Problem}
\newtheorem*{prob*}{Problem}
\newtheorem*{cl*}{Claim}
\newtheorem{cl}{Claim}[section]
\newtheorem*{scl*}{Subclaim}
\theoremstyle{definition}
\newtheorem{defn}[thm]{Definition}
\newcommand{\PL}{\mathsf{PL}}
\newcommand{\MF}{\mathsf{MF}}
\newcommand{\N}{\mathsf{N}}
\newcommand{\tc}{\vdash^{\mathrm{t}}}
\newcommand{\PA}{\mathsf{PA}}
\newcommand{\PR}{\mathrm{Pr}}
\newcommand{\Proof}{\mathrm{Proof}}
\newcommand{\Prov}{\mathrm{Prov}}
\newcommand{\Con}{\mathrm{Con}}
\newcommand{\gn}[1]{\ulcorner#1\urcorner}
\newcommand{\D}[1]{\mathbf{D#1}}
\newcommand{\num}{\overline}
\newcommand{\LA}{\mathcal{L}_A}
\newcommand{\EN}{\mathsf{EN}}
\newcommand{\ECN}{\mathsf{ECN}}
\newcommand{\ENP}{\mathsf{ENP}}
\newcommand{\END}{\mathsf{END}}
\newcommand{\ECNP}{\mathsf{ECNP}}
\newcommand{\GL}{\mathsf{GL}}
\begin{document}

\maketitle
\begin{abstract}
We study provability predicates $\mathrm{Pr}_T(x)$ satisfying the following  condition $\mathbf{E}$ from a modal logical perspective:
\\
$\mathbf{E}:$ if $ 
T \vdash \varphi \leftrightarrow \psi$, then $T \vdash \mathrm{Pr}_T(\ulcorner \varphi \urcorner) \leftrightarrow \mathrm{Pr}_T(\ulcorner \psi \urcorner)$.\\
For this purpose, we develop a new method of embedding models based on neighborhood semantics into arithmetic.
Our method broadens the scope of arithmetical completeness proofs.
In particular, we prove the arithmetical completeness theorems for the non-normal modal logics $\mathsf{EN}$, $\mathsf{ECN}$, $\mathsf{ENP}$, $\mathsf{END}$, and $\mathsf{ECNP}$.

% by embedding their neighborhood semantics into arithmetic. This embedding establishes a method that extends Solovay’s technique for proving arithmetical completeness to the framework of neighborhood semantics.
\end{abstract}
\section{Introduction}
Let $T$ denote a consistent primitive recursively axiomatized theory containing Peano arithmetic $\PA$.
In the usual proof of the second incompleteness theorem, a provability predicate $\PR_T(x)$ for $T$ plays a central role. The well-known second incompleteness theorem states that
if $\PR_T(x)$ satisfies Hilbert--Bernays--L\"{o}b's derivability conditions:
\begin{align*}
\D{2}: & \ T \vdash \PR_T(\gn{\varphi \to \psi}) \to (\PR_T(\gn{\varphi}) \to \PR_T(\gn{\psi})), \\
\D{3}: & \ T \vdash \PR_T(\gn{\varphi}) \to \PR_T(\gn{\PR_T(\gn{\varphi})}), 
\end{align*}
 then $T \nvdash \neg \PR_T(\gn{0=1})$ holds.
Furthermore, Kurahashi \cite{Kur25} established a refined version of the second incompleteness theorem,
 showing that the result holds under weaker assumptions on provability predicates.
In particular, he introduced the following conditions, which are weaker than $\D{2}$:
\begin{align*}
\mathbf{E}: & \ T \vdash \varphi \leftrightarrow \psi \Longrightarrow T \vdash \PR_T(\gn{\varphi}) \leftrightarrow \PR_T(\gn{\psi}), \\
\mathbf{C}: & \ T \vdash \PR_T(\gn{\varphi}) \wedge \PR_T(\gn{\psi}) \to \PR_T(\gn{\varphi \wedge \psi}),
\end{align*}
and showed that if $\PR_T(x)$ satisfies $\mathbf{E}$, $\mathbf{C}$, and $\D{3}$, then $T \nvdash \neg \PR_T(\gn{0=1})$ holds.
Moreover, the conditions 
$\mathbf{E}$ and $\mathbf{C}$ are also sufficient for the second incompleteness theorem based on other formulations of consistency.
% Also, in addition to $\neg \PR_T(\gn{0 = 1})$, many alternative formulations of  consistency statements have been proposed in the context of the second incompleteness theorem. The conditions $\mathbf{E}$ and $\mathbf{C}$ play a central role in the second incompleteness theorems based on alternative consistency statements.
 Kurahashi \cite{Kur20-2} introduced the consistency statement $\Con^S_T := \{ \neg (\PR_T(\gn{\varphi}) \wedge \PR_T(\gn{\neg \varphi})) \mid \varphi \text{ is a formula} \}$, which is called the schematic consistency 
statement. He also introduced 
the following consistency statement $\mathbf{Ros}: T \vdash \neg \varphi \Longrightarrow T \vdash \neg \PR_T(\gn{\varphi})$.
He proved the following:
\begin{itemize}
    \item 
     If $\PR_T(x)$ satisfies $\mathbf{E}$ and $\D{3}$, then $T \nvdash \Con^S_T$.
     \item
    If $\PR_T(x)$ satisfies $\mathbf{C}$ and $\D{3}$, then $\mathbf{Ros}$ does not hold for $\PR_T(x)$.
    
\end{itemize}

One of the most important result in provability logic
is Solovay’s arithmetical completeness theorem \cite{Sol}.
Solovay’s theorem states that for any $\Sigma_{1}$-sound recursively enumerable extension $T$ of $\PA$, modal logical principles verifiable in $T$ of the standard provability predicate $\Prov_T(x)$ are exactly characterized by the normal modal logic $\GL$. 
His proof is carried out by embedding suitable finite Kripke models for $\GL$ into arithmetic.
In our previous work \cite{KK2}, we investigated provability predicates $\PR_T(x)$ satisfying the condition $\mathbf{M} : T \vdash \varphi \to \psi \Longrightarrow T \vdash \PR_T(\gn{\varphi}) \to \PR_T(\gn{\psi})$.
We proved the arithmetical completeness theorems for the monotonic modal logic $\mathsf{MN}$, which is the logic equipped with the rule $(\mathsf{RM})\dfrac{A \to B}{\Box A \to \Box B}$.
Although $\mathsf{MN}$ is a non-normal modal logic that does not admit a Kripke semantics, it is known to have a monotonic neighborhood semantics (\cite{Che,Eric}).
In order to apply Solovay's method, we introduced a  relational semantics, which is similar to Kripke semantics, for $\mathsf{MN}$ and established its arithmetical completeness theorem by embedding this semantics into arithmetic.
Also, Fitting, Marek, and Truszczy\'{n}ski \cite{fmt}  introduced the non-normal modal logic $\mathsf{N}$, known as the pure logic of necessitation, and provided a Kripke-like relational semantics. 
Kurahashi \cite{Kur23} and the author \cite{kog} proved the arithmetical completeness theorems for several extensions of $\N$ by embedding such relational semantics into arithmetic.

Our argument on the proof of the arithmetical completeness for $\mathsf{MN}$ relies on relational semantics.
This naturally leads to the question of whether our framework can be extended to neighborhood semantics in general.
The modal logic $\mathsf{EN}$, which is weaker than $\mathsf{MN}$, is equipped with the rule  $(\mathsf{RE})\dfrac{A \leftrightarrow B}{\Box A \leftrightarrow \Box B}$ and 
the logic $\mathsf{ECN}$ is obtained from $\EN$ by adding the axiom $\mathsf{C}: \Box A \wedge \Box B \to \Box (A \wedge B)$. 
They have been traditionally studied as non-normal modal logics having neighborhood semantics (see \cite{Che, Eric}).
%Our argument on MN rely on relational semantics
%neighborhood semantics in genera
In \cite{KK2}, we posed the following question:
\begin{prob}[{\cite[Problem 7.1]{KK2}}]
	Is the logic $\mathsf{EN}$ arithmetically complete with respect to provability predicates $\PR_T(x)$ satisfying
the condition $\mathbf{E}$?
	\end{prob}
In this paper, we answer the question affirmatively and prove the arithmetical completeness theorems for $\EN$ and $\ECN$.
Unlike previous approaches to arithmetical completeness theorems that rely on relational semantics, our proofs directly embed neighborhood semantics into arithmetic, thereby extending Solovay’s technique to the setting of neighborhood semantics. 
We further introduce the systems
$\mathsf{ENP} = \EN + \neg \Box \bot$,
$\END = \EN + \neg (\Box A \wedge \Box \neg A)$, and
$\ECNP = \ECN + \neg \Box \bot$,
where the axioms $\neg \Box \bot$ and $\neg (\Box A \wedge \Box \neg A)$ correspond to the consistency statements $\neg \PR_T(\gn{0=1})$ and $\Con^S_T$, respectively.
We then prove the arithmetical completeness theorems for these systems.

%In Appendix, we construct a 

This paper is organized as follows. In Section \ref{pre}, we provide preliminaries and background of the second incompleteness theorem and non-normal modal logics having neighborhood semantics.
Sections \ref{pr1} and \ref{pr2} present the proofs of arithmetical completeness theorems for $\EN$, $\ENP$, $\END$, $\ECN$, and $\ECNP$.
In Section \ref{fur}, we discuss concluding remarks.

\section{Preliminaries and background}\label{pre}
% Throughout this paper, let $T$ be a primitive recursively axiomatized consistent extension
% of Peano arithmetic $\PA$ in the language $\LA$ of first-order arithmetic.
% Let $\omega$ be the set of all natural numbers. For each $n \in \omega$,  $\num{n}$ denotes the numeral of $n$.
% In the present paper, we fix a natural G\"{o}del numbering such that if $\alpha$ is a proper sub-expression of a finite sequence $\beta$ of $\LA$-symbols, 
% then the G\"{o}del number of $\alpha$ is less than that of $\beta$. For each formula $\varphi$, let $\gn{\varphi}$ be the numeral of the G\"{o}del number of $\varphi$.
% Let $\{ \xi_t \}_{t \in \omega}$ denote the repetition-free primitive recursive enumeration of all $\LA$-formulas in ascending order of G\"{o}del numbers. 
% We note that if $\xi_u$ is a proper subformula of $\xi_v$, then $u<v$.

\subsection{Provability predicate and derivability conditions}
Throughout this paper, let $T$ be a primitive recursively axiomatized consistent extension
of Peano arithmetic $\PA$ in the language $\LA$ of first-order arithmetic.
Let $\omega$ be the set of all natural numbers. For each $n \in \omega$,  $\num{n}$ denotes the numeral of $n$.
For each formula $\varphi$, let $\gn{\varphi}$ be the numeral of the G\"{o}del number of $\varphi$.

We say that a formula $\PR_T(x)$ is a \textit{provability predicate} of $T$
if for any formula $\varphi$, $T \vdash \varphi $ if and only if $\PA \vdash \PR_T(\gn{\varphi})$.
Let $\Proof_T(x,y)$ denote a primitive recursive $\LA$-formula naturally expressing that ``$y$ is the G\"{o}del number of a $T$-proof of a formula whose G\"{o}del number is $x$.'' In this paper, we assume that $\Proof_T(x,y)$ is a single-conclusion, 
that is, $\PA \vdash \forall y \forall x_1 \forall x_2 (\Proof_T(x_1,y) \wedge \Proof_T(x_2,y) \to x_1=x_2)$ holds. holds.
Let $\Prov_T(x)$ denote the $\Sigma_1$ formula $\exists y \Proof_T(x,y)$. It is shown that the formula $\Prov_T(x)$ 
is a provability predicate.
Let $\Con_T$ denote the formula $\neg \Prov_T(\gn{0=1})$.
We introduce the following derivability conditions.
\begin{defn}[Derivability conditions]
Let $\PR_T(x)$ be a provability predicate of $T$.
\begin{itemize}
\item 
$\D{2}:$ $T \vdash \PR_T(\gn{\varphi \to \psi}) \to (\PR_T(\gn{\varphi}) \to \PR_T(\gn{\psi}))$
\item 
$\D{3}: T \vdash \PR_T(\gn{\varphi}) \to \PR_T(\gn{\PR_T(\gn{\varphi})})$.
\item 
$\mathbf{E}:$ If $T \vdash \varphi \leftrightarrow \psi$, then $T \vdash \PR_T(\gn{\varphi}) \leftrightarrow \PR_T(\gn{\psi})$. 
\item
$\mathbf{C}:$ $T \vdash \PR_T(\gn{\varphi}) \wedge \PR_T(\gn{\psi}) \to \PR_T(\gn{\varphi \wedge \psi})$.  
\end{itemize}
\end{defn}
We assume that the formula $\Prov_T(x)$ satisfies the condition $\D{2}$ and $\D{3}$.
Also, the conditions $\mathbf{E}$ and $\mathbf{C}$, which were introduced in \cite{Kur25}, are weaker than the condition $\D{2}$.

Let $\Con^L_T$, $\mathbf{Ros}$, and $\Con^S_T$ be the following consistency statements.
\begin{defn}
Let $\PR_T(x)$ be a provability predicate of $T$.
\begin{itemize}
    \item 
    $\Con^L_T : \equiv \neg \PR_T(\gn{0=1})$.
    \item
    $\mathbf{Ros}$: If $T \vdash \neg \varphi$, then $T \vdash \neg \PR_T (\gn{\varphi})$ for any formula $\varphi$.
    \item
    $\Con^S_T := \{ \neg(\PR_T(\gn{\varphi}) \wedge \PR_T(\gn{\neg \varphi})) \mid \varphi \ \text{is a formula} \}$.
\end{itemize}
\end{defn}
The first one $\Con^L_T$ is a consistency statement of the well-known second incompleteness theorem asserts that if $\PR_T(x)$ satisfies the conditions $\D{2}$ and $\D{3}$, then $T \nvdash  \Con^L_T$ holds.
The second one $\mathbf{Ros}$, which was introduced in \cite{Kur25}, corresponds to the property of Rosser provability predicate. The
modal counterpart  $(\mathsf{Ros}) \dfrac{\neg A}{\neg \Box A}$ of $\mathbf{Ros}$ was investigated in \cite{Kur23}.
The third statement $\Con^S_T$ called the schematic consistency statement was studied in \cite{Kur20-2}.

% $\Con^S_\PR$ was investigated in is called schematic consistency statement
% were investigated in \cite{Kur2,Kur25}. 

% \item 
% $\mathbf{M}:$ If $T \vdash \varphi \to \psi \Longrightarrow T \vdash \PR(\gn{\varphi}) \to \PR(\gn{\psi})$ for any formulas $\varphi$ and $\psi$. 
% The following conditions $\mathbf{E}$ and $\mathbf{C}$, which were introduced in \cite{Kur25}, are weaker than the condition $\D{2}$.
% \begin{defn}
% Let $\PR(x)$ be a provability predicate of $T$.
% \begin{itemize}
% \item 
% $\mathbf{E}:$ If $T \vdash \varphi \leftrightarrow \psi \Longrightarrow T \vdash \PR(\gn{\varphi}) \leftrightarrow \PR(\gn{\psi})$ for any formulas $\varphi$ and $\psi$. 
% \item 
% $\mathbf{C}:$ $T \vdash \PR(\gn{\varphi}) \wedge \PR(\gn{\psi}) \to \PR(\gn{\varphi \wedge \psi})$ for any formulas $\varphi$ and $\psi$.  
% \end{itemize}
% \end{defn}
The following shows the relationship between the three consistency statements. 
% The conditions $\mathbf{E}$ and $\mathbf{C}$ strengthen  consistency statements.
\begin{prop}\label{con}
For any provability predicate $\PR_T(x)$, the following hold:
\begin{itemize}
    \item 
If $T \vdash \Con^S_T$, then $\PR_T(x)$ satisfies $\mathbf{Ros}$. 
\item 
If $\PR_T(x)$ satisfies $\mathbf{C}$ and $\mathbf{Ros}$, then $T \vdash \Con^S_T$.
    \item 
If $\PR_T(x)$ satisfies $\mathbf{Ros}$, then $T \vdash \Con^L_T$

    \item 
If $\PR_T(x)$ satisfies $\mathbf{E}$ and $T \vdash \Con^L_T$, then $\PR_T(x)$  satisfies $\mathbf{Ros}$.
\end{itemize}
\end{prop}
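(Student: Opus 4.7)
The plan is to prove each of the four implications by a short, direct deduction that uses only the defining property of a provability predicate, namely the necessitation-like fact that $T \vdash \varphi$ entails $\PA \vdash \PR_T(\gn{\varphi})$ and hence $T \vdash \PR_T(\gn{\varphi})$, together with whichever condition is named in the hypothesis ($\mathbf{E}$, $\mathbf{C}$, $\mathbf{Ros}$, or $\Con^S_T$). No encoding or fixed-point reasoning is needed, so no serious obstacle is anticipated; the only delicate choice is a well-chosen canonical refutable sentence in the fourth item.

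For the first item, I would assume $T \vdash \Con^S_T$ and $T \vdash \neg \varphi$. Necessitation gives $T \vdash \PR_T(\gn{\neg \varphi})$, and the $\varphi$-instance of $\Con^S_T$ then yields $T \vdash \neg \PR_T(\gn{\varphi})$, which is exactly $\mathbf{Ros}$. The third item is even more immediate: apply $\mathbf{Ros}$ to the $T$-refutable sentence $0=1$ to obtain $T \vdash \neg \PR_T(\gn{0=1})$, i.e., $T \vdash \Con^L_T$.

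For the second item, fix a formula $\varphi$. Condition $\mathbf{C}$ gives $T \vdash \PR_T(\gn{\varphi}) \wedge \PR_T(\gn{\neg \varphi}) \to \PR_T(\gn{\varphi \wedge \neg \varphi})$, while $\mathbf{Ros}$ applied to the refutable sentence $\varphi \wedge \neg \varphi$ gives $T \vdash \neg \PR_T(\gn{\varphi \wedge \neg \varphi})$. Chaining these two facts in $T$ produces the $\varphi$-instance of $\Con^S_T$, and since $\varphi$ was arbitrary we conclude $T \vdash \Con^S_T$.

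For the fourth item, which I consider the most substantive step, the key observation is that if $T \vdash \neg \varphi$ then $T$ proves $\varphi \leftrightarrow 0=1$. Condition $\mathbf{E}$ then gives $T \vdash \PR_T(\gn{\varphi}) \leftrightarrow \PR_T(\gn{0=1})$, and combining with the hypothesis $T \vdash \Con^L_T \equiv \neg \PR_T(\gn{0=1})$ immediately yields $T \vdash \neg \PR_T(\gn{\varphi})$, establishing $\mathbf{Ros}$. The only ``trick'' here is to notice that $0=1$ serves as a canonical $T$-refutable sentence via which $\mathbf{E}$ transports the negative consistency information from $\Con^L_T$ to every $T$-refutable formula.
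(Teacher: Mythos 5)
Your proof is correct, and all four deductions are exactly the standard ones: the paper states Proposition \ref{con} without giving a proof (it is treated as routine, following \cite{Kur25} and \cite{Kur20-2}), and your arguments — necessitation plus the relevant instance of $\Con^S_T$ for the first item, $\mathbf{Ros}$ applied to $0=1$ and to $\varphi \wedge \neg\varphi$ for the third and second, and transporting $\Con^L_T$ along $T \vdash \varphi \leftrightarrow 0=1$ via $\mathbf{E}$ for the fourth — are precisely what is intended. No gaps.
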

With respect to $\mathbf{Ros}$ and $\Con^S_T$, the following versions of the second incompleteness theorem hold.
\begin{thm}\label{thm CD3}
Let $\PR_T(x)$ be any provability predicate of $T$.
\begin{itemize}
    \item
    If $\PR_T(x)$ satisfies $\mathbf{C}$ and $\D{3}$, then $\mathbf{Ros}$ does not hold for $\PR_T(x)$.
    \item 
     If $\PR_T(x)$ satisfies $\mathbf{E}$ and $\D{3}$, then $T \nvdash \Con^S_T$.
\end{itemize}

\end{thm}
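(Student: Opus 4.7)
The plan is to prove both parts by a single Gödelian diagonalization. Pick a sentence $\gamma$ with $T \vdash \gamma \leftrightarrow \neg \PR_T(\gn{\gamma})$, equivalently $T \vdash \neg \gamma \leftrightarrow \PR_T(\gn{\gamma})$. In both arguments the final step is the same: once $T \vdash \neg \PR_T(\gn{\gamma})$ is established, the fixed-point equivalence gives $T \vdash \gamma$, hence $\PA \vdash \PR_T(\gn{\gamma})$ because $\PR_T$ is a provability predicate, contradicting the consistency of $T$. The two parts differ only in how $T \vdash \neg \PR_T(\gn{\gamma})$ is reached.

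For the second part, assume toward contradiction that $T \vdash \Con^S_T$. Applying $\mathbf{E}$ to the provable equivalence $\neg \gamma \leftrightarrow \PR_T(\gn{\gamma})$ yields $T \vdash \PR_T(\gn{\neg \gamma}) \leftrightarrow \PR_T(\gn{\PR_T(\gn{\gamma})})$. Chaining this with the $\D{3}$ implication $\PR_T(\gn{\gamma}) \to \PR_T(\gn{\PR_T(\gn{\gamma})})$ gives $T \vdash \PR_T(\gn{\gamma}) \to \PR_T(\gn{\neg \gamma})$. The $\gamma$-instance $T \vdash \neg(\PR_T(\gn{\gamma}) \wedge \PR_T(\gn{\neg \gamma}))$ of $\Con^S_T$ then forces $T \vdash \neg \PR_T(\gn{\gamma})$, and the common finish applies.

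For the first part, assume toward contradiction that $\mathbf{Ros}$ holds; Proposition \ref{con} already delivers $T \vdash \Con^S_T$ from $\mathbf{C}$ and $\mathbf{Ros}$, but the real task is to substitute for $\mathbf{E}$ (which is not assumed). The idea is to feed $\mathbf{Ros}$ a carefully chosen conjunction. Because $T \vdash \neg \gamma \leftrightarrow \PR_T(\gn{\gamma})$, the sentence $\gamma \wedge \PR_T(\gn{\gamma})$ is refutable in $T$, so $\mathbf{Ros}$ yields $T \vdash \neg \PR_T(\gn{\gamma \wedge \PR_T(\gn{\gamma})})$. By $\mathbf{C}$, $T \vdash \PR_T(\gn{\gamma}) \wedge \PR_T(\gn{\PR_T(\gn{\gamma})}) \to \PR_T(\gn{\gamma \wedge \PR_T(\gn{\gamma})})$, so $T \vdash \neg(\PR_T(\gn{\gamma}) \wedge \PR_T(\gn{\PR_T(\gn{\gamma})}))$. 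Finally $\D{3}$ supplies $T \vdash \PR_T(\gn{\gamma}) \to \PR_T(\gn{\PR_T(\gn{\gamma})})$, whence $T \vdash \neg \PR_T(\gn{\gamma})$, and the common finish applies.

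The non-routine step is the choice made in the first part: the conjunction $\gamma \wedge \PR_T(\gn{\gamma})$ is picked precisely so that $\mathbf{Ros}$ converts its refutability into a $\neg \PR_T$-statement of exactly the shape that $\mathbf{C}$ can produce from $\PR_T(\gn{\gamma})$ together with $\PR_T(\gn{\PR_T(\gn{\gamma})})$; $\D{3}$ then closes the loop. Identifying this composite sentence, as a replacement for the congruence given by $\mathbf{E}$, is where the real work lies; everything else is direct manipulation of the derivability conditions and the provability-predicate property.
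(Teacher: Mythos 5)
Your proof is correct; the paper states Theorem~\ref{thm CD3} without proof (it is quoted from Kurahashi's work), and your argument --- taking a G\"odel fixed point $\gamma$ with $T \vdash \gamma \leftrightarrow \neg \PR_T(\gn{\gamma})$ and using $\D{3}$ together with $\mathbf{E}$ (resp.\ with $\mathbf{C}$ applied to the $\mathbf{Ros}$-refutable conjunction $\gamma \wedge \PR_T(\gn{\gamma})$) to force $T \vdash \neg \PR_T(\gn{\gamma})$ and hence a contradiction --- is exactly the standard argument from the cited sources. Both halves check out, including the common finishing step from $T \vdash \gamma$ to $T \vdash \PR_T(\gn{\gamma})$ via the defining property of a provability predicate.
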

By Proposition \ref{con} and Theorem \ref{thm CD3}, we obtain the following.
\begin{cor}\label{ECD3}
Let $\PR_T(x)$ be any provability predicate of $T$.
If $\PR_T(x)$ satisfies $\mathbf{E}$, $\mathbf{C}$, and $\D{3}$, then $T \nvdash \Con^L_T$.
\end{cor}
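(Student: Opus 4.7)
The plan is to derive the corollary by chaining the implications in Proposition \ref{con} with Theorem \ref{thm CD3}, arguing by contradiction. Suppose $\PR_T(x)$ satisfies $\mathbf{E}$, $\mathbf{C}$, and $\D{3}$, and assume for contradiction that $T \vdash \Con^L_T$.

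First, I would apply the fourth clause of Proposition \ref{con}: since $\PR_T(x)$ satisfies $\mathbf{E}$ and we are assuming $T \vdash \Con^L_T$, it follows that $\PR_T(x)$ satisfies $\mathbf{Ros}$. At this point two closing moves are available, and both give the contradiction. The direct route is to invoke the first clause of Theorem \ref{thm CD3}: since $\PR_T(x)$ satisfies $\mathbf{C}$ and $\D{3}$, the property $\mathbf{Ros}$ cannot hold for $\PR_T(x)$, contradicting what we just established. Alternatively, one may push through $\Con^S_T$: using the second clause of Proposition \ref{con}, the conditions $\mathbf{C}$ and $\mathbf{Ros}$ together yield $T \vdash \Con^S_T$, while the second clause of Theorem \ref{thm CD3} tells us that $\mathbf{E}$ and $\D{3}$ imply $T \nvdash \Con^S_T$, again a contradiction.

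There is essentially no obstacle here, since all the genuine work is already packaged in Proposition \ref{con} and Theorem \ref{thm CD3}; the corollary is a short diagrammatic deduction. The only thing to be mindful of is to state clearly that the assumption $T \vdash \Con^L_T$ is what activates the fourth clause of Proposition \ref{con}, since without it the condition $\mathbf{E}$ alone does not deliver $\mathbf{Ros}$. I would therefore present the argument as a two-line contrapositive: assume $T \vdash \Con^L_T$, derive $\mathbf{Ros}$ from $\mathbf{E}$, and then contradict either $\mathbf{C} + \D{3} \Rightarrow \neg \mathbf{Ros}$ or the $\Con^S_T$-version of the second incompleteness theorem.
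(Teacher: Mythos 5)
Your proposal is correct and follows exactly the route the paper intends: the paper derives the corollary directly from Proposition~\ref{con} and Theorem~\ref{thm CD3}, which is precisely your chain (assume $T \vdash \Con^L_T$, use $\mathbf{E}$ to get $\mathbf{Ros}$, then contradict the first clause of Theorem~\ref{thm CD3}). Your alternative closing move via $\Con^S_T$ is equally valid but not a genuinely different argument, so nothing further is needed.
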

Kurahashi \cite{Kur25} proved that $\{ \mathbf{E}, \mathbf{C}, \mathbf{D3} \}$ is strictly weaker than $\{ \D{2}, \D{3}\}$. Thus, Corollary \ref{ECD3} is an improvement of the well-known second incompleteness theorem.

%The logic $\mathbf{K4}$ is obtained by adding the axiom scheme $\Box A \to \Box  \Box A$ to $\mathbf{K}$.
%The schemes $\Box (A \to B) \to (\Box A \to \Box B)$ and $\Box A \to \Box  \Box A$ are modal counter parts $\D{2}$ and $\D{3}$ respectively.

\subsection{Provability logic}
The language of modal propositional logic consists of  
propositional variables, the logical constant $\bot$, the logical connectives $\neg, \wedge, \vee, \to$,
and the modal operator $\Box$.
 Let $\MF$ denote the set of all modal propositional formulas.
We say that a modal logic $L$ is \textit{normal} if it contains all tautologies and the distribution axiom $\Box (A \to B) \to (\Box A \to \Box B)$ and is closed under Modus Ponens (\textrm{MP}) $\dfrac{A\to B \quad A}{B}$, Necessitation (\textrm{Nec}) $\dfrac{A}{\Box A}$, and uniform substitution. 
We say that a modal logic $L$ is \textit{non-normal} if the logic $L$ is not normal.
The weakest normal modal logic is called $\mathsf{K}$. 

% The axioms of the basic modal logic $\mathbf{K}$ consists of propositional tautologies in the language  $\mathcal{L}_\Box$
% and the distribution axiom scheme $\Box (A \to B) \to (\Box A \to \Box B)$. 
% The inference rules of modal logic $\mathbf{K}$ consists of Modus Ponens $(\mathrm{MP}) \ \dfrac{A \quad A \to B}{B}$
% and Necessitation $(\mathrm{Nec}) \ \dfrac{A}{\Box A}$.

For each provability predicate $\PR_T(x)$ of $T$,
we say that a mapping $f$ from $\MF$ to a set of $\LA$-sentences 
is an \textit{arithmetical interpretation} based on $\PR_T(x)$ if $f$ satisfies the following clauses:
\begin{itemize}
\item 
$f(\bot)$ is $0=1$, 
\item
$f(\neg A)$ is $\neg f(A)$,
\item 
$f(A \circ B)$ is $f(A) \circ f(B)$ for $\circ \in \{ \wedge , \vee , \to \}$, and
\item 
$f(\Box A)$ is $\PR_T(\gn{f(A)})$.
\end{itemize}
For any provability predicate $\PR_T(x)$ of $T$, 
let $\PL(\PR_T)$ denote the set of all modal formulas $A$ such that 
for any arithmetical interpretation $f$ based on $\PR_T(x)$, $T \vdash f(A)$.
We call the set $\PL(\PR_T)$ \textit{provability logic} of $\PR_T(x)$. 
A well-known result in the study of provability logics is Solovay's arithmetical completeness theorem.
The modal logic $\mathsf{GL}$ is obtained by adding the axiom scheme $\Box (\Box A \to A) \to \Box A$ to $\mathsf{K}$. 
\begin{thm}[Solovay \cite{Sol}]
If $T$ is $\Sigma_1$-sound, then $\PL(\Prov_T) = \mathsf{GL}$.
\end{thm}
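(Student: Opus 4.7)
The theorem splits into two inclusions. Soundness, $\GL \subseteq \PL(\Prov_T)$, is routine: tautologies and Modus Ponens are preserved by any arithmetical interpretation by definition, Necessitation holds because $\Prov_T(x)$ is a provability predicate, the distribution axiom comes from $\D2$, and L\"{o}b's axiom $\Box(\Box A \to A) \to \Box A$ is derived by the standard internal argument using $\D2$, $\D3$, and the diagonal lemma. No new ideas are required on this side.

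The substantive work is completeness. Suppose $A \notin \GL$. I would first invoke Segerberg's theorem that $\GL$ is complete with respect to finite transitive irreflexive Kripke models, obtaining a model $(W, R, \Vdash)$ and a world $r \in W$ with $r \not\Vdash A$. Adjoin a fresh root $0 \notin W$ with $0\, R^{*}\, x$ for every $x \in W$ and set $W^{*} = \{0\} \cup W$; the extended relation $R^{*}$ remains transitive and irreflexive. Using the recursion theorem together with the diagonal lemma, I would define Solovay's primitive recursive function $h : \omega \to W^{*}$ with $h(0) = 0$, which at each stage either stays put or moves from its current value $i$ to some $j$ with $i R^{*} j$, the move being triggered by the discovery of a $T$-proof of $\neg S_{j}$; the sentences $S_{k} :\equiv \lim_{n} h(n) = k$ (for $k \in W^{*}$) are introduced by a simultaneous fixed point. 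By $\Sigma_{1}$-soundness of $T$ the true limit is $\ell = 0$, so $\mathbb{N} \models S_{0}$.

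The next step is to establish, inside $\PA$, the standard Solovay properties: exhaustiveness and mutual exclusivity of the $S_{i}$; $S_{i} \to \neg \Prov_T(\gn{\neg S_{j}})$ whenever $i R^{*} j$; and, for each $i \in W^{*}$, a principle relating $S_{i}$ to $\Prov_T$ of a disjunction of $S_{j}$'s associated with $i$ and its $R^{*}$-successors. Define $f(p) := \bigvee\{ S_{i} : i \in W,\ i \Vdash p \}$ and extend compositionally. The truth lemma then asserts, for every modal $B$ and every $i \in W$, that $i \Vdash B$ implies $T \vdash S_{i} \to f(B)$ and $i \not\Vdash B$ implies $T \vdash S_{i} \to \neg f(B)$; it is proved by induction on $B$, the $\Box$-clause combining $\D2$, $\D3$, and the properties above. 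Applied at $r$, this yields $T \vdash S_{r} \to \neg f(A)$, so if $T$ proved $f(A)$ then $T \vdash \neg S_{r}$, whence $\PA \vdash \Prov_T(\gn{\neg S_{r}})$, contradicting the property instance at $0 R^{*} r$ together with $\mathbb{N} \models S_{0}$ and $\Sigma_{1}$-soundness. The main obstacle is the self-referential construction of $h$ and the family $\{S_{i}\}_{i \in W^{*}}$, and the arithmetical verification, entirely inside $\PA$, of the Solovay properties that drive the inductive step in the truth lemma; once those are secured, the remaining argument is careful but mechanical.
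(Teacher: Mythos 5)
Your outline is a correct sketch of the standard Solovay argument, and it matches exactly the approach the paper attributes to this result (the paper cites the theorem from \cite{Sol} without proof, noting only that it is proved by embedding finite Kripke models of $\GL$ into arithmetic). The soundness direction via $\D{2}$, $\D{3}$, and L\"ob's theorem, and the completeness direction via a rooted finite $\GL$-model, the self-referential function $h$ with limit sentences $S_i$, the Solovay lemmas, and the truth lemma are all as in the original proof.
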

Solovay proved the theorem by embedding  finite Kripke models of $\mathsf{GL}$ into arithmetic.
\subsection{Non-normal modal logics having neighborhood semantics}
 In this paper, we investigate the conditions $\mathbf{E}$ and $\mathbf{C}$ from the perspective of provability logic.
We introduce several non-normal modal logics that are closed under the rule $(\mathsf{RE}) \dfrac{A \leftrightarrow B}{\Box A \leftrightarrow \Box B}$.
\begin{defn}
The axiom and rules of the modal logic $\mathsf{EN}$ are as follows:
\begin{itemize}
    \item propositional tautologies.
    \item $(\mathsf{MP}) \ \dfrac{A \quad A \to B}{B}$.
    \item $(\mathsf{Nec}) \ \dfrac{A}{\Box A}$.
    \item $(\mathsf{RE}) \dfrac{A \leftrightarrow B}{\Box A \leftrightarrow \Box B}$.
\end{itemize}
Several extensions of $\EN$ are defined as follows:
\begin{itemize}
    \item $\ECN := \EN + (\Box A \wedge \Box B \to \Box (A \wedge B))$.
    \item $\ENP := \EN + \neg \Box \bot$.
    \item $\END := \EN + \neg (\Box A \land \Box \neg A)$.
    \item $\ECNP:= \ECN + \neg \Box \bot$.
\end{itemize}
\end{defn}
% The logic $\mathsf{ECN}$ is obtained from $\mathsf{EN}$ by adding the axiom $\Box A \wedge \Box B \to \Box (A \wedge B)$.
% The logic $\ENP$ and $\END$ are  obtained from $\mathsf{EN}$ by adding the axioms $\neg \Box \bot$ and $\neg (\Box A \wedge \Box \neg A)$, respectively. The logic $\mathsf{ECNP}$ is obtained from $\mathsf{ECN}$ by adding the axiom $\neg \Box \bot$.
Here, $\neg \Box \bot$ and $\neg (\Box A \wedge \Box \neg A)$ are modal counterparts of $\Con^L_T$ and $\Con^S_T$, respectively.
Adding the modal counterpart of $\mathbf{Ros}$ to $\mathsf{EN}$ leads to the same system as $\ENP$. Therefore, in what follows, we focus on modal counterparts of $\Con^L_T$ and $\Con^S_T$.
Moreover, note that over $\mathsf{ECN}$, the axioms $\neg \Box \bot$ and $\neg (\Box A \land \Box \neg A)$ become equivalent.
% \begin{defn}
% \leavevmode
% \begin{itemize}
%     \item $\EN = \N + \dfrac{A \leftrightarrow B}{\Box A \leftrightarrow \Box B}$.
%     \item $\ECN = \EN+ \Box A \wedge \Box B \to \Box (A \wedge B)$. 
%     \item $\ENP = \EN + \neg \Box \bot$.
% \item $\END = \EN + \neg (\Box A \wedge \Box \neg A)$.
% \item $\ECNP = \ECN + \neg \Box \bot$.    
% \end{itemize}
% \end{defn}

The logic $\EN$ does not contain the distribution axiom, and does not admit Kripke semantics.
We introduce neighborhood semantics, which 
provides a suitable framework for non-normal modal logics, including systems such as $\EN$, $\ECN$, and others.
For references on neighborhood semantics, see \cite{Che,Eric}.

\begin{defn}[$\mathsf{EN}$-frame]
A pair $(W,N)$ is called an $\mathsf{EN}$-frame if 
the following conditions hold:
\begin{itemize}
    \item 
$W$ is a non-empty set.
    \item 
$N$ is a function $N : W \to \mathcal{P (\mathcal{P}}(W))$ such that for any $x \in W,$ $W \in N(x)$.
\end{itemize}

\end{defn}

\begin{defn}[$\EN$-model]
A triple $(W,N, v)$ is an $\EN$-model if $(W,N)$ is an $\EN$-frame and $v$ is a function from $\MF$ to $\mathcal{P}(W)$ satisfying the following conditions: for any $x \in W$,
\begin{itemize}
    \item $x \notin v(\bot)$. 
    \item $x \in v(A \wedge B) \iff x \in v(A) \cap v(B)$.
    \item 
    $x \in v(\neg A) \iff x \notin v(A)$.
    \item 
    $x \in v(A \vee B) \iff x \in v(A) \cup v(B)$.
    \item 
    $x \in v(A \to B) \iff$ if $x \in v(A)$, then $x \in v(B)$.
    \item
    $x \in v(\Box A) \iff v(A) \in N(x)$.
\end{itemize}

\end{defn}

\begin{defn}
\leavevmode

\begin{itemize}
\item 
A formula $A$ is valid in an $\EN$-model $(W,N,v)$ if for any $x \in W$, $x \in v(A)$.
\item 
A formula $A$ is valid in an $\EN$-frame $(W,N)$ if $A$ is valid in any $\EN$-model  whose frame is $(W,N)$.
\end{itemize}
\end{defn}
We introduce some frame properties.
\begin{defn}[$\ECN$, $\ENP$, $\END$, and $\mathsf{ECNP}$-frames]
\leavevmode

\begin{itemize}
    \item 
An $\EN$-frame $(W,N)$ is an $\ECN$-frame if for any $x \in W$ and $U, V \in \mathcal{P}(W)$, if $U, V \in N(x)$, then $U \cap V \in N(x)$. 
    \item 
An $\EN$-frame $(W,N)$ is an $\ENP$-frame if for any $x \in W$, $\emptyset \notin N(x)$.
\item 
An $\EN$-frame $(W,N)$ is an $\END$-frame if for any $x \in W$ and $V \in \mathcal{P}(W)$, if $V \in N(x)$, then $W \setminus V \notin N(x)$.
\item 
An $\EN$-frame $(W,N)$ is an $\ECNP$-frame if $(W,N)$ is $\ENP$-frame and $\ECN$-frame.
\end{itemize}
\end{defn}

As in the case of Kripke semantics, the validity of an axiom is characterized by a property of frames.
\begin{prop}[Cf.~\cite{Che}]
Let $(W,N)$ be an $\EN$-frame.
\begin{itemize}
    \item 
 $\Box p \wedge \Box q \to \Box (p \wedge q)$ is valid in $(W,N)$ if and only if $(W,N)$ is an $\ECN$-frame.   
    \item 
$\neg \Box \bot$ is valid in $(W,N)$ if and only if $(W,N)$ is an $\ENP$-frame.
    \item 
$\neg (\Box p \wedge \Box \neg p)$ is valid in $(W,N)$ 
if and only if $(W,N)$ is an $\END$-frame.
\end{itemize}

\end{prop}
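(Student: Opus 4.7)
The plan is to prove each of the three equivalences by establishing both directions separately, and in each case the key idea is that one direction uses the frame property to verify the axiom for arbitrary valuations, while the other direction constructs a witnessing valuation by assigning propositional variables to the neighborhoods in question.

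For the first item, I would first assume that $(W,N)$ is an $\ECN$-frame and fix an arbitrary model $(W,N,v)$ and point $x \in W$ with $x \in v(\Box p \wedge \Box q)$. Unfolding the semantic clauses gives $v(p), v(q) \in N(x)$, and closure under intersection yields $v(p) \cap v(q) = v(p \wedge q) \in N(x)$, so $x \in v(\Box(p\wedge q))$. For the converse, given $x \in W$ and $U, V \in N(x)$, I would define a valuation $v$ on an $\EN$-model with frame $(W,N)$ by setting $v(p) := U$ and $v(q) := V$ (and extending arbitrarily on the remaining variables). Validity of $\Box p \wedge \Box q \to \Box(p \wedge q)$ at $x$ then forces $U \cap V = v(p \wedge q) \in N(x)$, which is exactly the $\ECN$ property.

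The second and third items follow the same pattern. For $\neg \Box \bot$, the left-to-right direction uses that $v(\bot) = \emptyset$ in every model, so if $\emptyset \notin N(x)$ then $x \not\in v(\Box \bot)$; conversely, validity of $\neg \Box \bot$ at every $x$ directly gives $\emptyset = v(\bot) \notin N(x)$. For $\neg(\Box p \wedge \Box \neg p)$, assuming the $\END$ property, I would argue by contradiction: if $x \in v(\Box p \wedge \Box \neg p)$ then $v(p) \in N(x)$ and $W \setminus v(p) = v(\neg p) \in N(x)$, contradicting the frame condition. For the converse, given $V \in N(x)$ with $W \setminus V \in N(x)$, the valuation with $v(p) := V$ satisfies $x \in v(\Box p \wedge \Box \neg p)$, contradicting validity.

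There is no genuine obstacle here: these are the standard correspondence results for neighborhood frames, and the only point that requires any care is making sure that in the converse directions the constructed valuations yield well-defined $\EN$-models, which is immediate because the definition of $v$ is free on propositional variables and the semantic clauses determine $v$ on compound formulas. I would therefore present the proof compactly, handling all three items in parallel and explicitly remarking that the $\Leftarrow$ directions rely on the freedom to choose $v(p)$ and $v(q)$ to be any prescribed subsets of $W$.
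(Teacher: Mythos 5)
Your proof is correct and is exactly the standard correspondence argument for neighborhood frames that the paper is implicitly invoking by citing Chellas (the paper gives no proof of its own). Both directions of each item, including the construction of witnessing valuations $v(p):=U$, $v(q):=V$ in the converse directions and the observation that a valuation is freely determined by its values on propositional variables, are sound.
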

A modal formula is \text{non-iterative} (see \cite{Eric}) if no modal operator occurs within the scope of another.
For example, the formulas
$\Box p \wedge \Box q \to \Box(p \wedge q)$,  $\neg \Box \bot$, and $\neg (\Box p \wedge \Box \neg p)$ are non-iterative.
A logic $L$ is called non-iterative if $L$ extends $\mathsf{EN}$ and $L$ is axiomatized by non-iterative axioms.
Thus, the systems $\EN$, $\ENP$, $\END$, $\ECN$, and $\ECNP$ are all non-iterative.
For each non-iterative logic $L$ which is finitely axiomatizable, 
Lewis \cite{lew} proved that $L$ has finite frame property with respect to neighborhood semantics. In particular, we obtain the following.
\begin{thm}[Cf. {\cite[Theorem 2.50]{Eric}}]\label{lew}
Let $L \in \{ \EN, \ENP, \END, \ECN, \ECNP   \}$ and $A \in \MF$. The following are equivalent:
\begin{enumerate}
    \item $L \vdash A$.
    \item $A$ is valid in any $L$-frame.
    \item $A$ is valid in any finite $L$-frame.
\end{enumerate}
\end{thm}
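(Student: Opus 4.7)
The direction (1) $\Rightarrow$ (2) is soundness, which I would handle by induction on $L$-derivations. Propositional tautologies and (MP) cause no difficulty; (Nec) preserves validity because every $\EN$-frame enforces $W \in N(x)$, so $v(A) = W$ forces $v(A) \in N(x)$ for every $x$; and (RE) preserves validity since the $\Box$-clause of the valuation depends only on $v(A)$. The additional axioms of $\ECN$, $\ENP$, $\END$, and $\ECNP$ are valid on their respective frame classes by the proposition immediately preceding this theorem. The implication (2) $\Rightarrow$ (3) is immediate, since any finite $L$-frame is an $L$-frame.

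The content of the theorem lies in the finite model property (3) $\Rightarrow$ (1), which I would establish by contrapositive: given $L \nvdash A$, construct a finite $L$-model falsifying $A$. Let $\Sub(A)$ be the set of subformulas of $A$, let $W$ be the set of restrictions $x = \hat{x} \cap \Sub(A)$ as $\hat{x}$ ranges over $L$-maximal consistent sets, and for atomic $p \in \Sub(A)$ put $v(p) = \{x \in W : p \in x\}$, extending to $\Sub(A)$ by the Boolean clauses. Define
\[
N(x) = \{v(B) : \Box B \in x\} \cup \{W\}.
\]
Both $W$ and $N(x)$ are finite since $\Sub(A)$ is. Because $L \nvdash A$, some MCS witnesses $A \notin x_0$ for some $x_0 \in W$. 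The heart of the argument is the truth lemma $x \in v(B) \iff B \in x$ for $B \in \Sub(A)$; only the modal case $B = \Box C$ requires care, and its $(\Leftarrow)$ direction reduces to the following: if $v(C) = v(D)$ with $\Box D \in x$, then every $L$-MCS makes $C \leftrightarrow D$ true by the inductive hypothesis, so $L \vdash C \leftrightarrow D$, and (RE) yields $L \vdash \Box C \leftrightarrow \Box D$, whence $\Box C \in x$.

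The main obstacle I anticipate is verifying the frame conditions for each specific $L$. For $\ENP$ and $\END$ this is essentially bookkeeping: $\emptyset \in N(x)$ or $v(B), v(\neg B) \in N(x)$ would force $\Box\bot \in \hat{x}$ or $\Box B \wedge \Box\neg B \in \hat{x}$, contradicting the corresponding axiom. The $\ECN$ condition is more delicate: given $v(B), v(C) \in N(x)$ one needs $v(B) \cap v(C) = v(B \wedge C) \in N(x)$, but $B \wedge C$ need not lie in $\Sub(A)$. Following Lewis's treatment of non-iterative logics, the remedy is either to enlarge $\Sub(A)$ to a finite superset closed under the Boolean operations occurring in the axioms---the non-iterative form of the axioms guarantees that no new modal depth is introduced, keeping $W$ finite---or equivalently to redefine $N(x) = \{\bigcap_i v(B_i) : \Box B_i \in x\} \cup \{W\}$ and reverify the truth lemma using the $\ECN$ axiom together with (RE). Either route yields a finite $L$-frame falsifying $A$, completing (3) $\Rightarrow$ (1).
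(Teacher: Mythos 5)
Your argument is correct, but it is worth noting that the paper does not prove this theorem at all: it invokes Lewis's general result that every finitely axiomatizable non-iterative logic has the finite frame property with respect to neighborhood semantics, and merely observes that $\EN$, $\ENP$, $\END$, $\ECN$, and $\ECNP$ are non-iterative. Your proposal instead re-derives the special case directly via a finite canonical model built from restrictions of $L$-maximal consistent sets to $\Sub(A)$, with $N(x)$ generated by the sets $v(B)$ for $\Box B \in x$; this is essentially the construction underlying Lewis's general theorem, specialized to the five logics. The citation buys brevity and uniformity over all non-iterative axioms; your explicit construction has the advantage of making visible exactly where each frame condition comes from, and of exhibiting an effective procedure (which the paper needs anyway to justify the primitive recursiveness claims in Corollary \ref{Lew2}). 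Two small points deserve care. First, the step you label the $(\Leftarrow)$ direction of the truth lemma (from $v(C) \in N(x)$ to $\Box C \in x$) is really the $(\Rightarrow)$ direction, and it has a second sub-case, $v(C) = W$, handled by $L \vdash C$, Necessitation, and $(\mathsf{RE})$; the genuine $(\Leftarrow)$ direction is immediate from the definition of $N$. Second, for $\ECN$ the phrase ``enlarge $\Sub(A)$ to a finite superset closed under the Boolean operations'' cannot be taken literally, since closure under $\wedge$ is infinite as a set of formulas; one must either work with conjunctions of subsets of $\Sub(A)$ up to propositional equivalence, or adopt your second remedy of putting all finite intersections $\bigcap_i v(B_i)$ with $\Box B_i \in x$ into $N(x)$ and re-running the truth lemma using the axiom $\mathsf{C}$ together with $(\mathsf{RE})$. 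With either repair the argument goes through and also yields the $\ECNP$ case, since $\bigcap_i v(B_i) = \emptyset$ would force $\Box \bot$ into the underlying maximal consistent set.
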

From the proof of Theorem \cite{lew}, the sets of all theorems of $\EN$, $\ENP$, $\END$, $\ECN$, and $\ECNP$ are primitive recursive.
\begin{cor}\label{Lew2}
For each $L \in \{ \EN, \ENP, \END, \ECN, \ECNP   \}$, there exists a primitive recursive decision procedure for provability in $L$. 
\end{cor}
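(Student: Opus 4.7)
The plan is to combine the finite frame property from Theorem \ref{lew} with the observation that Lewis's proof actually provides a \emph{primitive recursively computable} bound on the size of a refuting frame. Concretely, to decide whether $L \vdash A$ for $L \in \{\EN,\ENP,\END,\ECN,\ECNP\}$, I would test validity of $A$ in every $L$-frame whose cardinality is below such a bound, and declare $L \vdash A$ exactly when $A$ survives all of these tests. Theorem \ref{lew} guarantees that this yes/no answer is correct.

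First I would extract from the proof of Theorem \ref{lew} a primitive recursive function $N_L \colon \omega \to \omega$ with the following property: if $L \nvdash A$, then $A$ is refuted in some $L$-frame $(W,N)$ with $|W| \leq N_L(|A|)$. Lewis's construction for non-iterative logics proceeds by taking the set of worlds to consist (essentially) of truth assignments to the subformulas of $A$ together with a compatible neighborhood structure, so one can take $N_L(|A|) = 2^{|\Sub(A)|}$, which is manifestly primitive recursive in $|A|$. The closure conditions defining $\ECN$-, $\ENP$-, $\END$-, and $\ECNP$-frames are imposed by a routine closure step that does not increase the cardinality beyond this bound.

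Given the bound, the decision procedure is straightforward. There are only finitely many pairs $(W,N)$ with $W \subseteq \{0,1,\dots,N_L(|A|)-1\}$ and $N \colon W \to \mathcal{P}(\mathcal{P}(W))$, and these can be enumerated primitive recursively. Checking that such a pair satisfies the defining clauses of an $L$-frame (namely $W \in N(x)$ for each $x$, together with the further closure properties corresponding to $\mathsf{C}$, $\neg \Box \bot$, or $\neg(\Box p \wedge \Box\neg p)$) is a bounded Boolean computation over a finite structure, hence primitive recursive. For each such $L$-frame, I would iterate over the finitely many valuations of the propositional variables of $A$ on $W$, evaluate $A$ at each world by primitive recursion on the structure of $A$ (the modal clause reduces to checking whether the truth set of the immediate subformula belongs to $N(x)$), and reject $A$ as soon as some world fails to satisfy it.

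I expect no single hard step: the only point that needs care is the explicit verification that Lewis's finite-frame construction is primitive recursive in the input $|A|$, both in producing the bound $N_L(|A|)$ and in making the enumeration of candidate frames and valuations primitive recursive rather than merely computable. Once this is in place, the correctness of the procedure follows immediately from the equivalence $(1)\Leftrightarrow(3)$ in Theorem \ref{lew}, giving the primitive recursive decidability of $L$ for each of the five systems.
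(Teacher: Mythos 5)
Your proposal is correct and follows essentially the same route as the paper, which justifies the corollary solely by appeal to the proof of Lewis's finite frame property (Theorem \ref{lew}): a primitive recursive bound on the size of a refuting frame yields a bounded exhaustive search over finite frames and valuations. Your write-up merely makes explicit the details that the paper leaves implicit.
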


%\section{Summary of results}
%\subsection{Provability logic}

\section{Arithmetical completeness of $\mathsf{EN}$, $\mathsf{ENP}$, and $\mathsf{END}$}\label{pr1}
In this section, we prove the arithmetical completeness theorems for $\mathsf{EN}$, $\mathsf{ENP}$, and $\mathsf{END}$. Before proving the theorems, we introduce some notions used throughout this paper.

We call an $\LA$-formula  \textit{propositionally atomic} if it is either atomic or of the form $Q x \psi$, where $Q \in \{\forall, \exists  \}$.
For each propositionally atomic formula $\varphi$, we prepare a propositional variable $p_{\varphi}$.
Let $I$ be a primitive recursive injection from $\LA$-formulas into propositional formulas, which is defined as follows:
\begin{itemize}
\item 
$I(\varphi)$ is $p_{\varphi}$ for each propositionally atomic formula $\varphi$,
\item 
$I(\neg \varphi)$ is $\neg I(\varphi)$,
\item 
$I(\varphi \circ \psi)$ is $I(\varphi) \circ I(\psi)$ for $\circ \in \{ \wedge, \vee, \to \}$.
\end{itemize}
Let $X$ be a finite set of $\LA$-formulas.
An $\LA$-formula $\varphi$ is called a \textit{tautological consequence} of $X$
if $\bigwedge_{\psi \in X}I(\psi) \to I(\varphi)$ is a tautology.  
Let $X \tc \varphi$ denote that $\varphi$ is a tautological consequence of $X$.
For each $n \in \omega$, let $F_n$ be the set of all $\LA$-formulas whose G\"{o}del numbers are less than or equal to $n$. Let $\mathbb{N}$ be the standard model of arithmetic and we define $P_{T,n} : = \{ \varphi \in F_{n} \mid \mathbb{N} \models \exists y \leq \num{n} \ \Proof_T(\gn{\varphi}, y)  \}$. 
We see that  if 
$P_{T,n} \tc \varphi$, then $\varphi$ is provable in $T$.
The above notions can be formalized in $\PA$.

For each $m \in \omega$, we define a binary relation $\leftrightarrow_{m}$ on $\LA$-formulas as follows.
\begin{align*}
\varphi \leftrightarrow_m \psi \ \text{if and only if there exists  a finite sequence} \ \psi_0, \ldots, \psi_k \ \\
\text{such that} \ \varphi \equiv \psi_0,  \psi \equiv \psi_k, \  \text{and} \ \text{for each} \ i<k, \psi_i \leftrightarrow \psi_{i+1} \in P_{T,m}.
\end{align*}
The primitive recursiveness of the relation $\{(\varphi, \psi,m)\mid \varphi\leftrightarrow_m \psi  \}$ is easily proved.
We obtain the following properties.

\begin{prop}
Let $m \in \omega$ and $\varphi$ and $\psi$ be formulas.
\begin{enumerate}
\item 
If $P_{T,m} \tc \varphi$, then $\varphi$ is provable in $T$. 
\item 
If $\varphi \leftrightarrow_{m} \psi$, then $P_{T,m} \tc \varphi \leftrightarrow \psi$.
\item 
The binary relation $\leftrightarrow_{m}$ is transitive.

\end{enumerate}
\end{prop}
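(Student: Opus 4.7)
The plan is to prove each of the three items by unwinding the definitions; the only mildly subtle step is a propositional-to-first-order transfer in item (1), while (2) and (3) are pure bookkeeping with finite sequences.

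For (1), I would first observe that every $\psi \in P_{T,m}$ is a $T$-theorem, since $\mathbb{N} \models \exists y \le \num{m}\,\Proof_T(\gn{\psi}, y)$ supplies an actual $T$-proof of $\psi$. The hypothesis $P_{T,m} \tc \varphi$ says that $\bigwedge_{\psi \in P_{T,m}} I(\psi) \to I(\varphi)$ is a propositional tautology. I would then transfer this to the statement that $\bigwedge_{\psi \in P_{T,m}} \psi \to \varphi$ is itself a first-order validity: because $I$ is injective on propositionally atomic formulas and commutes with $\neg, \wedge, \vee, \to$, any assignment of truth values to the propositionally atomic subformulas involved induces a Boolean valuation of the variables $p_\theta$, so the tautological implication survives the translation back. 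Combined with the first observation, this yields $T \vdash \varphi$.

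For (2), I would take a witnessing sequence $\varphi \equiv \psi_0, \ldots, \psi_k \equiv \psi$ with each $\psi_i \leftrightarrow \psi_{i+1} \in P_{T,m}$, so that each $I(\psi_i) \leftrightarrow I(\psi_{i+1})$ appears as a conjunct of $\bigwedge_{\chi \in P_{T,m}} I(\chi)$. Chaining these biconditionals propositionally yields $I(\psi_0) \leftrightarrow I(\psi_k)$, which is exactly $I(\varphi \leftrightarrow \psi)$, giving $P_{T,m} \tc \varphi \leftrightarrow \psi$. For (3), given witnessing sequences $\varphi = \psi_0, \ldots, \psi_k = \psi$ and $\psi = \chi_0, \ldots, \chi_\ell = \chi$, the concatenation $\psi_0, \ldots, \psi_{k-1}, \chi_0, \ldots, \chi_\ell$ is itself a valid witness for $\varphi \leftrightarrow_m \chi$, since both pieces draw their biconditionals from the same fixed set $P_{T,m}$.

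The only point requiring any real argument is the propositional-to-first-order transfer in (1); once $I$ is recognized as a Boolean-structure-preserving renaming of propositionally atomic $\LA$-formulas as propositional variables, the transfer is immediate, and the remaining parts are routine.
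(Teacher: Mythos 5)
Your proof is correct, and it is exactly the routine verification the paper has in mind: the paper states this proposition without proof, relying on the standard facts that substitution instances of tautologies are provable in first-order logic (for item 1) and that witnessing sequences for $\leftrightarrow_m$ can be chained and concatenated (for items 2 and 3). Your identification of the propositional-to-first-order transfer as the only nontrivial step, justified via the injectivity of $\varphi \mapsto p_\varphi$, is the intended argument.
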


Next, we prepare a primitive recursive function $h$, which is originally introduced in \cite{Kur20}.
The function $h$ is defined by using the recursion theorem as follows:
\begin{itemize}
	\item $h(0) = 0$. 
	\item $h(s+1) = \begin{cases} \text{min} \ J_s & \text{if}\ h(s) = 0 \ \text{and} \ J_s \neq \emptyset, \\
				
			h(s) & \text{otherwise},
		\end{cases}$
\end{itemize}
where $J_s = \{ j \in \omega \setminus \{0\} \mid P_{T,s} \tc \neg \lambda(\num{j}) \}$ and $\lambda(x)$ is the $\Sigma_1$ formula $\exists y(h(y) = x)$. 
The function $h$ satisfies the property that for each $s$ and $i$, $h(s+1)=i$ implies $i \leq s+1$, which guarantees that the function $h$ is primitive recursive (See \cite{Kur20}, p. 603).
The following proposition holds for $h$. 
\begin{prop}[Cf.~{\cite[Lemma 3.2.]{Kur20}}]\label{Prop:h}
\leavevmode
\begin{enumerate}
	\item $\PA \vdash \forall x \forall y(0 < x < y \land \lambda(x) \to \neg \lambda(y))$. 
	\item $\PA \vdash \neg \Con_T \leftrightarrow \exists x(\lambda(x) \land x \neq 0)$. 
	\item For each $i \in \omega \setminus \{0\}$, $T \nvdash \neg \lambda(\num{i})$. 
	\item For each $l \in \omega$, $\PA \vdash \forall x \forall y(h(x) = 0 \land h(x+1) = y \land y \neq 0 \to x \geq \num{l})$. 
\end{enumerate}
\end{prop}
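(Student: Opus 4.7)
The plan is to prove the four clauses in order by unpacking the recursive definition of $h$ and invoking formalized $\Sigma_1$-completeness at the one nontrivial step.

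For clause (1), I would reason inside $\PA$ that $h$ takes at most one positive value: the recursion freezes $h$ once it leaves $0$, and the jump clause can only fire from $0$, so if $s_0$ is the least stage with $h(s_0) = x > 0$, then an easy $\PA$-induction gives $h(s) = 0$ for $s < s_0$ and $h(s) = x$ for $s \geq s_0$. Hence no $y > x$ satisfies $\lambda(y)$.

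For clause (2), the direction $\neg \Con_T \to \exists x(\lambda(x) \land x \neq 0)$ is direct: from $\neg \Con_T$ one obtains a stage $n$ at which $0 = 1 \in P_{T,n}$, so $P_{T,n}$ is tautologically inconsistent and $J_n \neq \emptyset$; either $h(n)$ is already positive, or $h(n+1) = \min J_n > 0$, so in either case some $x \neq 0$ witnesses $\lambda$. Conversely, if $\lambda(x) \land x \neq 0$, letting $s_0$ be the least witnessing stage, the analysis of clause (1) forces $h(s_0 - 1) = 0$, so $x = \min J_{s_0 - 1}$ and therefore $\Prov_T(\gn{\neg \lambda(\dot x)})$. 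Since $\lambda(x)$ is $\Sigma_1$, formalized $\Sigma_1$-completeness yields $\Prov_T(\gn{\lambda(\dot x)})$, and combining them under $\D{2}$ produces $\Prov_T(\gn{0=1})$, i.e.\ $\neg \Con_T$.

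Clause (3) is the external counterpart. If $T \vdash \neg \lambda(\num{i})$ for some standard $i > 0$, then for some standard $s$ we have $i \in J_s$; letting $s_0$ be the least such stage, clause (1) gives $h(s_0) = 0$, hence $h(s_0 + 1) = j_0 := \min J_{s_0} > 0$. Then $\lambda(j_0)$ is true in $\mathbb{N}$, so $T \vdash \lambda(\num{j_0})$ by $\Sigma_1$-completeness, while $j_0 \in J_{s_0}$ also gives $T \vdash \neg \lambda(\num{j_0})$, contradicting the consistency of $T$. Clause (4) is then immediate: by (3) every standard $J_s$ is empty, so $\mathbb{N} \models h(s) = 0$ for every standard $s$; since $h$ is primitive recursive, $\PA \vdash h(\num{s}) = 0$ for each standard $s \leq l$, and conjoining these finitely many facts with the $\PA$-provable case-split for $x < \num{l}$ yields $\PA \vdash \forall x \leq \num{l}\, h(x) = 0$, which implies the stated formula.

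The main delicacy throughout is the correct formal handling of $\dot x$ and formalized $\Sigma_1$-completeness in clause (2); everything else is just bookkeeping around the definition of $h$.
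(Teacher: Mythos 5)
Your proof is correct and is essentially the standard argument for this lemma; the paper itself gives no proof, deferring to the cited source, whose proof proceeds exactly as you do (the freezing behaviour of $h$ for clause 1, formalized $\Sigma_1$-completeness plus $\mathbf{D2}$ for clause 2, and the emptiness of every standard $J_s$ for clauses 3 and 4). The one imprecision is in clause 3: if $s_0$ is the least stage with $i \in J_{s_0}$, then $h(s_0)=0$ is not automatic, since $h$ may already have jumped at an earlier stage where $J$ first became nonempty --- but running your argument at that first stage $s'$ with $J_{s'}\neq\emptyset$ yields the same contradiction, so this is cosmetic.
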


We are ready to prove the following uniform version of the arithmetical completeness theorems for $\mathsf{EN}$, $\mathsf{ENP}$, and $\mathsf{END}$. 
\begin{thm}\label{thmEN}
For $L \in \{ \mathsf{EN}, \mathsf{ENP}, \mathsf{END}    \}$, 
there exists a $\Sigma_1$ provability predicate $\PR_T(x)$ of $T$ satisfying $\mathbf{E}$ such that
\begin{enumerate}
    \item 
    for any $A \in \MF$ and any arithmetical interpretation $f$ based on $\PR_T(x)$, if $L \vdash A$, then $T \vdash f(A)$, and
    \item 
    there exists an arithmetical interpretation $f$ based on $\PR_T(x)$ such that for any $A \in \MF$, $L \vdash A$ if and only if $T \vdash f(A)$.
\end{enumerate}

\end{thm}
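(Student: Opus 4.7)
The plan is to adapt Solovay's arithmetical completeness technique to the non-normal setting of $\EN$, $\ENP$, $\END$ by constructing, for each such $L$, a single $\Sigma_1$ provability predicate $\PR_T(x)$ satisfying $\mathbf{E}$ together with a uniform arithmetical interpretation $f$ that witnesses every non-theorem of $L$. Item (1) will be an automatic consequence of how $\PR_T(x)$ is built, while item (2) is the genuine uniform completeness statement. Using the effective finite model property (Corollary \ref{Lew2}), I first enumerate primitive recursively a sequence $\{(W_k, N_k, v_k, r_k)\}_{k \in \omega}$ of pointed finite $L$-models such that for every non-theorem $A$ of $L$ some component has $r_k \notin v_k(A)$, and amalgamate these into a master frame $(\widehat W, \widehat N)$ with a fresh root $0$, stipulating $\widehat N(0) := \{\widehat W\}$ and $\widehat N(i) := N_k(i)$ (viewed inside $\widehat W$) for $i \in W_k$. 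Since each $\widehat N(i)$ at a non-root world contains only subsets of $W_k$, the defining frame properties of $\ENP$ and $\END$ are preserved pointwise, and at the root they hold by inspection of $\widehat N(0) = \{\widehat W\}$.

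Using the recursion theorem I would then build a single primitive recursive dance $h : \omega \to \widehat W$ in the style of Proposition \ref{Prop:h}, with Solovay sentences $S_i \equiv \lambda(\num{i})$ for $i \in \widehat W \setminus \{0\}$, $S_0 \equiv \neg \bigvee_{i \neq 0} S_i$, and $\sigma_U := \bigvee_{j \in U} S_j$ for $U \subseteq \widehat W$. Define $f(p) := \bigvee_{i \in v^*(p)} S_i$ with $v^*$ aggregating the $v_k$'s (and $0 \notin v^*(p)$), and set
\[
\PR_T(\gn{\varphi}) \;\equiv\; \exists s \Bigl[ P_{T,s} \tc \varphi \ \vee \ \exists i \exists U \bigl( U \in \widehat N(i) \wedge h(s) = i \wedge \varphi \leftrightarrow_s \sigma_U \bigr) \Bigr].
\]
Since $\widehat N$ is primitive recursive and the quantifiers over $i, U$ are effectively bounded, this formula is $\Sigma_1$. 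Verification then proceeds in four steps. (a) $\PR_T(x)$ is a provability predicate of $T$: the nontrivial direction uses that externally the dance stays at $0$, forcing any second-disjunct witness to give $U = \widehat W$ and $\varphi \leftrightarrow_s \sigma_{\widehat W} \equiv \top$, hence $T \vdash \varphi$. (b) $\mathbf{E}$ holds, using transitivity of $\leftrightarrow_s$ together with the $\Sigma_1$-witnessability of a stage $m_0$ at which $\varphi \leftrightarrow \psi$ enters $P_{T,m_0}$. (c) Soundness of $L$: $(\mathsf{Nec})$ and $(\mathsf{RE})$ follow from (a) and (b), while $\neg \Box \bot$ for $\ENP$ and $\neg(\Box A \wedge \Box \neg A)$ for $\END$ follow from the corresponding frame conditions on $\widehat N$ combined with $T$-provable consistency and mutual exclusivity of the $\sigma_U$'s. (d) Completeness: by an inductive truth lemma within each component, $T \vdash S_i \to f(A)$ iff $i \in v_k(A)$; instantiating at $i = r_k$ for a component witnessing $L \nvdash A$ gives $T \vdash S_{r_k} \to \neg f(A)$, so $T \nvdash f(A)$ since $T \nvdash \neg S_{r_k}$.

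The main obstacle will be step (b), the internal $T$-provability of $\mathbf{E}$. The definition of $\PR_T$ invokes the equivalence $\leftrightarrow_s$, whose graph is primitive recursive but stage-dependent, and one must formalise $T$-provably that substituting $\psi$ for $\varphi$ inside the second disjunct is benign once both enter $P_{T,m_0}$-equivalence, uniformly in the witness stage $s$. The modal step of the truth lemma in (d) is the second delicate point: under $S_i$ with $i \in W_k$, the formula $f(A)$ $T$-provably collapses to $S_i$ or $\bot$ according as $i \in v_k(A)$ or not, and one must match this against the behaviour of $\PR_T(\gn{f(A)})$ by exhibiting the relevant $U \in N_k(i)$ and verifying $f(A) \leftrightarrow_s \sigma_U$ at a suitable stage. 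A final subtlety is that the per-component Solovay property $T \nvdash \neg S_{r_k}$ must survive $h$'s range spanning all components at once, which I expect to follow from Proposition \ref{Prop:h}(4) applied componentwise, provided the primitive recursive enumeration is designed so that $h$ can visit every component without starving the others.
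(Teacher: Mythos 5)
Your overall architecture is the paper's: a primitive recursive enumeration of finite countermodels with disjoint domains covering $\omega\setminus\{0\}$, the single-jump Solovay function $h$ of Proposition \ref{Prop:h}, a $\Sigma_1$ predicate that behaves like $\Prov_T$ while $h$ sits at $0$ and reads off the neighborhood structure $N_k(i)$ after a jump to $i\in W_k$, closure under $\leftrightarrow_s$ to secure $\mathbf{E}$, and a truth lemma by induction on modal formulas. However, there is a genuine gap in your key definition. Your second disjunct admits $\varphi$ only when $\varphi \leftrightarrow_s \sigma_U$ for some $U \in \widehat N(i)$, i.e.\ only when $T$ outright proves $\varphi \leftrightarrow \bigvee_{j\in U}\lambda(\num{j})$ with $U$ a subset of the single component $W_k$. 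The positive $\Box$-case of your truth lemma then needs $T \vdash f(C) \leftrightarrow \sigma_{v_k(C)}$, and this is false in general: $T$ proves $\sigma_{v_k(C)} \to f(C)$, but not the converse, since $f(C)$ is also true when the limit of $h$ lands in a different component $W_{k'}$ at a world in $v_{k'}(C)$. So $\PR_T(\gn{f(C)})$ fails at worlds $i$ with $v_k(C)\in N_k(i)$, and the completeness direction collapses. The paper avoids this by replacing your two-sided equivalence with the one-sided, component-relativized conditions defining its set $Y$: it asks only that $\lambda(\num{j})\to\varphi$ lie in $P_{T,s-1}$ for each $j\in V$ and $\lambda(\num{j})\to\neg\varphi$ for each $j\in W_k\setminus V$ (plus a $\leftrightarrow_{s-1}$-closure for $\mathbf{E}$); each of these implications \emph{is} $T$-provable for $\varphi = f_0(C)$ and $V=v_k(C)$, because under $\lambda(\num{j})$ the formula $f_0(C)$ is decided. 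This weakening is exactly what makes the $\END$ and $\ENP$ claims and the negative $\Box$-case still go through, via the frame conditions on $N_k(i)$.

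Two further, related defects: $\widehat W$ and the sets $v^*(p)$ are infinite, so $\sigma_{\widehat W}$ and your $f(p) := \bigvee_{i\in v^*(p)} S_i$ are not formulas; the interpretation must be a quantified formula as in the paper's $f_0(p) \equiv \exists x\exists y(x\in W_y \wedge \lambda(x)\wedge x\neq 0 \wedge x\in v_y(p))$, which is precisely why the provable equivalence to a finite disjunction is unavailable. Also, your existential over arbitrary stages $s$ with $h(s)=i$ is looser than the paper's evaluation of $X\cup Y$ at the jump stage using $P_{T,s-1}$; the latter is what lets the argument exploit $h(s)=0$ (hence $P_{T,s-1}\not\vdash^{\mathrm{t}}\neg\lambda(\num{j})$) to derive the contradictions needed in Claims \ref{ENP}, \ref{END}, and \ref{EN3}.
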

\begin{proof}
Let $L \in \{ \mathsf{EN}, \mathsf{ENP}, \mathsf{END}  \}$.
By Fact \ref{Lew2}, we obtain a primitive recursive enumeration $\langle A_k \rangle_{k \in \omega}$ of all $L$-unprovable formulas.
For each $A_k$, we can primitive recursively construct a finite $L$-model $(W_k, N_k,v_k)$ falsifying $A_k$.
We may assume that the sets $\{W_k\}_{k \in \omega}$ are pairwise disjoint and $\bigcup_{k \in \omega} W_k= \omega \setminus \{0\}$.
We may also assume that 
 $\langle (W_k, N_k, v_k) \rangle_{k \in \omega}$ is primitive recursively represented in $\PA$ and some basic properties of this enumeration are provable in $\PA$.
 
 % enumeration $\langle (W_k, N_k, v_k) \rangle_{k \in \omega}$ of pairwise disjoint finite $L$-models such that $\bigcup_{n \in \omega} W_n= \omega \setminus \{0\}$, and if $L \nvdash A$, then there exist $n \in \omega$ and $i \in W_n$ such that $i \notin v_n(A)$.
We define the primitive recursive function $g_0$ outputting all $T$-provable formulas step by step.
The definition of $g_0$ consists of Procedures 1 and 2, and starts with Procedure 1.
In Procedure 1, we define the values $g_0(0), g_0(1), \ldots$ 
 by referring to the values $h(0), h(1), \ldots$ and $T$-proofs based on $\Proof_T(x,y)$. 
At the first time $h(s+1) \neq 0$, the definition of $g_0$ switches to Procedure 2 at Stage $s$. The function $g_0$ is defined as follows.

% $f_0(p) \equiv \exists x \exists y( x \in W_y \wedge \lambda(x) \wedge x \neq 0 \wedge p \in v_y(p))$.
\medskip

\textbf{Procedure 1.}

Stage $s$:
\begin{itemize}
\item If $h(s+1) =0$,
\begin{equation*}
  g_0(s)  = \begin{cases}
       \varphi & \text{if}\ s\ \text{is a}\ T \text{-proof of}\ \varphi \\
               0 & \text{otherwise}.
             \end{cases}
  \end{equation*}

Then, go to Stage $s+1$.

\item If $h(s+1) \neq 0$, go to Procedure 2.
\end{itemize}

\textbf{Procedure 2.}

Suppose $s$ and $i \neq 0$ satisfy $h(s)=0$ and $h(s+1)=i$. 
Let $k$ be the number such that $i \in W_k$. 
Let $\chi_{0}, \ldots , \chi_{l-1}$ be the enumeration of all formulas in $X \cup Y$, where
\[
X = \{ \psi \in F_{s-1} \mid \exists \varphi \in P_{T,s-1} \ \text{such that} \ \varphi \leftrightarrow_{s-1} \psi \}
\]
and 
\begin{align*}
Y = \{ \psi \in F_{s-1} \mid  \exists \varphi & \in P_{T,s-1} \ \exists V \in N_k(i) [ \forall j \in V (\lambda(\num{j}) \to \varphi \in P_{T,s-1}) \\
& \& \  \forall j \in W_k \setminus V (\lambda(\num{j}) \to \neg \varphi \in P_{T,s-1})  \ \& \ \varphi \leftrightarrow_{s-1} \psi \ ]\}.
\end{align*}
\begin{itemize}
    \item 
For each $t< l$, let $g_0(s+t) = \chi_t$.
    \item
For each $t \geq l$, let $g_0(s+t) = 0$.   
\end{itemize}
The construction of $g_0$ has just been  finished. We define $\PR_{g_0}(x) \equiv \exists y (g_0(y)=x)$.

% The following claim ensures that the formula $\PR_{g_0}(x)$ is a $\Sigma_1$ provability predicate of $T$.
\begin{cl}\label{EN1}
Let $\varphi$ be an $\LA$-formula. Then, $\PA + \Con_T \vdash \PR_{g_0}(\gn{\varphi}) \leftrightarrow \Prov_T (\gn{\varphi})$.
\end{cl}
\begin{proof}
We work in $\PA + \Con_{T}$: By Proposition \ref{Prop:h}.2, the definition of $g_0$ never switches to Procedure 2.
Hence, for any $s$, $g_0(s)= \varphi$ if and only if $s$ is a proof of $\varphi$ in $T$. 
\end{proof}
Therefore, $\PR_{g_0}(x)$  is a $\Sigma_1$ provability predicate of $T$.
We prove $\PR_{g_0}(x)$ satisfies the condition $\mathbf{E}$.
\begin{cl}\label{EN2}
Let $\varphi$ and $\psi$ be $\LA$-formulas. If $T \vdash \varphi \leftrightarrow \psi$, then $\PA \vdash \PR_{g_0}(\gn{\varphi}) \leftrightarrow \PR_{g_0}(\gn{\psi})$.
    
\end{cl}
\begin{proof}
Suppose $T \vdash \varphi \leftrightarrow \psi$. 
%Let $p \in \omega$ be a $T$-proof of $\varphi \leftrightarrow \psi$.
By Claim \ref{EN1}, we obtain $\PA + \Con_T \vdash \PR_{g_0}(\gn{\rho}) \leftrightarrow \Prov_T(\gn{\rho})$ for any formula $\rho$. Since $\Prov_T(x)$ satisfies the condition $\mathbf{E}$, we obtain $\PA \vdash \Prov_T(\gn{\varphi}) \leftrightarrow \Prov_T(\gn{\psi})$. Thus, it follows that $\PA + \Con_T \vdash \PR_{g_0}(\gn{\varphi}) \leftrightarrow \PR_{g_0}(\gn{\psi})$.

Next, we prove $\PA + \neg \Con_T \vdash \PR_{g_0}(\gn{\varphi}) \leftrightarrow \PR_{g_0}(\gn{\psi})$. By Proposition \ref{Prop:h}.2, it suffices to prove that $\PA + \exists x (\lambda(x) \wedge x \neq 0) \vdash \PR_{g_0}(\gn{\varphi}) \leftrightarrow \PR_{g_0}(\gn{\psi})$.
We only prove that $\PA + \exists x (\lambda(x) \wedge x \neq 0) \vdash \PR_{g_0}(\gn{\varphi}) \to \PR_{g_0}(\gn{\psi})$.
Let $p \in \omega$ be a $T$-proof of $\varphi \leftrightarrow \psi$.

We work in $\PA + \exists x (\lambda(x) \wedge x \neq 0)$: Let $i \neq 0$, $k$ and $s$ be such that $h(s) = 0$ and $h(s+1) = i \in W_k$.
By Proposition \ref{Prop:h}.4, the number $s$ is non-standard and we obtain $p\leq s-1$.
Thus, it follows that $\varphi \leftrightarrow_{s-1} \psi$. 
Suppose $\PR_{g_0}(\gn{\varphi})$, that is, $\varphi$ is output by $g_0$.
If $\varphi$ is output in Procedure 1, then we obtain $\varphi \in P_{T,s-1}$.
Since $\varphi \leftrightarrow_{s-1} \psi$, it follows that $\psi \in X$. Thus, $\psi$ is output by $g_0$ in Procedure 2.
If $\varphi$ is output in Procedure 2, then we obtain $\varphi \in X \cup Y$. 
If $\varphi \in X$, then there exists a formula $\rho \in P_{T,s-1}$ such that $\rho \leftrightarrow_{s-1} \varphi$.
By $\varphi \leftrightarrow_{s-1} \psi$ and the transitivity of $\leftrightarrow_{s-1}$, we obtain $\rho \leftrightarrow_{s-1} \psi$.
Thus, $\psi \in X$ holds.
If $\varphi \in Y$, then we obtain $\psi \in Y$ by the similar argument as in the case $\varphi \in X$.
In either case, we obtain $\psi \in  X \cup Y$. Therefore, $\psi$ is output by $g_0$ in Procedure 2, that is, we obtain $\PR_{g_0}(\gn{\psi})$.

Thus, by the law of excluded middle, we obtain $\PA \vdash \PR_{g_0}(\gn{\varphi}) \leftrightarrow \PR_{g_0}(\gn{\psi})$.
\end{proof}
If $L$ is either $\mathsf{ENP}$ or $\mathsf{END}$,
 consistency statements $\Con^L_{T}$ and
 $\Con^S_{T}$
are provable in $T$, respectively.
\begin{cl}\label{ENP}
If the logic $L$ is $\mathsf{ENP}$, then $\PA \vdash \neg \PR_{g_0}(\gn{0=1})$.
\end{cl}

\begin{proof}
By Claim \ref{EN1}, we obtain $\PA + \Con_T \vdash \neg \PR_{g_0}(\gn{0=1})$, and therefore it suffices to prove that $\PA + \exists x (\lambda(x) \wedge x \neq 0) \vdash \neg \PR_{g_0}(\gn{0=1})$.

We work in $\PA + \exists x (\lambda(x) \wedge x \neq 0)$:
Let $s$, $i \neq 0$, and $k$ be such that $h(s)=0$ and $h(s+1)=i \in W_k$. Suppose, towards a contradiction, that $\PR_{g_0}(\gn{0=1})$ holds, that is, $0=1$ is output by $g_0$. If $0=1$ is in $P_{T, s-1} \cup X$, then $P_{T,s-1} \tc 0=1$ because every element of $X$ is a tautological consequence of $P_{T,s-1}$.
Thus, we obtain $P_{T,s-1} \tc \neg \lambda(\num{i})$. This contradicts  $h(s)=0$. Therefore we obtain $0=1 \in Y$.
Then, there exists a formula $\varphi$ and a set $V \in N_k(i)$ such that 
$\varphi \leftrightarrow_{s-1} 0=1$ and
$\lambda(\num{j}) \to \varphi \in P_{T,s-1}$ for any $j \in V$.
By the frame condition of $\mathsf{ENP}$, the set $V \in N_{k}(\num{i})$ is non-empty.
Then, there exists a $j \in V$ such that $\lambda(\num{j}) \to \varphi \in P_{T, s-1}$. Since $\varphi \leftrightarrow_{s-1} 0=1$, we have $P_{T,s-1} \tc \neg \lambda(\num{j})$. This contradicts $h(s)=0$. Therefore, we conclude that $\neg \PR_{g_0}(\gn{0=1})$ holds.

By the law of excluded middle, we obtain $\PA \vdash \neg \PR_{g_0}(\gn{0=1})$.
\end{proof}

\begin{cl}\label{END}
If $L$ is $\mathsf{END}$, then for any $\LA$-formula $\varphi$, $\PA \vdash \neg \bigl( \PR_{g_0}(\gn{\varphi}) \wedge \PR_{g_0}(\gn{\neg \varphi}) \bigr)$ holds.
\end{cl}
\begin{proof}
Let $\varphi$ be any formula.
Since $\PA + \Con_T \vdash \neg \bigl( \Prov_{T}(\gn{\varphi}) \wedge \Prov_T(\gn{\neg \varphi}) \bigr)$, by Claim \ref{EN1}, we obtain $\PA + \Con_T \vdash \neg \bigl( \Pr_{g_0}(\gn{\varphi}) \wedge \Pr_{g_0}(\gn{\neg \varphi}) \bigr)$. It suffices to prove that $\PA + \exists x (\lambda(x) \wedge x \neq 0) \vdash \neg \bigl( \Pr_{g_0}(\gn{\varphi}) \wedge \Pr_{g_0}(\gn{\neg \varphi}) \bigr)$.

We work in $\PA + \exists x (\lambda(x) \wedge x \neq 0)$: Let $s$, $i \neq 0$, and $k$ be such that $h(s)=0$ and $h(s+1)=i \in W_k$.
Suppose, towards a contradiction, that $\PR_{g_0}(\gn{\varphi}) \wedge \PR_{g_0}(\gn{\neg \varphi})$ holds. Then, we obtain $\varphi, \neg \varphi \in P_{T, s-1} \cup X \cup Y$.
We distinguish the following four cases.
\begin{enumerate}
\item[(i)] $\varphi, \neg \varphi \in P_{T,s-1} \cup X$: Then, $P_{T,s-1}$ is inconsistent. Thus, $\neg \lambda(\num{i})$ is a t.c.~of $P_{T,s-1}$, and this contradicts $h(s)=0$.

\item[(ii)] $\varphi \in P_{T,s-1} \cup X$ and $\neg \varphi \in Y$:
Then, there exists a set $V \in N_k(i)$ such that
$P_{T,s-1} \tc \lambda(\num{j}) \to \neg \varphi$
for any 
$j \in V$.
Since $(W_k, N_k)$ is an $\EN$-frame, we obtain
$W \in N_k(i)$, and the frame condition of $\mathsf{END}$ implies $\emptyset \notin N_k(i)$. Thus, $V$ is non-empty and there exists a number $j \in V$ such that $P_{T,s-1} \tc \lambda(\num{j}) \to \neg \varphi$. Thus, it follows that $P_{T,s-1} \tc \varphi \to  \neg \lambda(\num{j})$.
By $\varphi \in P_{T,s-1} \cup X$, we obtain $P_{T,s-1} \tc \neg \lambda(\num{j})$. This contradicts $h(s)=0$. 

\item[(iii)] $\neg \varphi \in P_{T,s-1} \cup X$ and $\varphi \in Y$:
By the similar argument as in the case (ii), we obtain a contradiction.
\item[(iv)] $\varphi, \neg \varphi \in Y$:
Then, there exists $V_0, V_1 \in \mathcal{P}(W_k)$ such that $V_0, V_1 \in N_k(i)$, 
and the following conditions hold:
\begin{itemize}
\item 
$P_{T,s-1} \tc \lambda(\num{j}) \to \varphi$ for any $j \in V_0$ and \\$P_{T,s-1} \tc \lambda(\num{j}) \to \neg \varphi$ for any $j \in W_k \setminus V_0$.
\item 
$P_{T,s-1} \tc \lambda(\num{j}) \to \neg \varphi$ for any $j \in V_1$ and \\ $P_{T,s-1} \tc \lambda(\num{j}) \to \neg \neg \varphi$ for any $j \in W_k \setminus V_1$.
\end{itemize}
We prove $V_0 = W_k \setminus V_1$. Suppose, towards a contradiction, that $V_0 \neq W_k \setminus V_1$.
Then it follows that $V_0 \not \subseteq W_k \setminus V_1$ or $W_k \setminus V_1 \not \subseteq V_0$ hold.
If $V_0 \not \subseteq W_k \setminus V_1$,  then, there exists a number $j \in V_0 \cap V_1$ such that $P_{T, s-1} \tc (\lambda(\num{j}) \to \varphi) \wedge (\lambda(\num{j}) \to \neg \varphi)$.
Therefore, we obtain $P_{T,s-1} \tc \neg \lambda(\num{j})$. This contradicts $h(s)=0$.
If $W_k \setminus V_1 \not \subseteq V_0$, then by the similar argument as in the case $V_0 \not \subseteq W_k \setminus V_1$, we obtain a contradiction.
Thus, it follows that $V_0 = W_k \setminus V_1$.
Since $V_0, V_1 \in N_k(i)$, we obtain $(W_k \setminus V_1), V_1 \in N_k(i)$. This contradicts the frame condition of $\mathsf{END}$.
\end{enumerate}
Thus, we conclude that $\neg(\PR_{g_0}(\gn{\varphi}) \wedge \PR_{g_0}(\gn{\neg \varphi}))$ holds.

By the law of excluded middle, we obtain $\PA \vdash \neg(\PR_{g_0}(\gn{\varphi}) \wedge  \PR_{g_0}(\gn{\neg \varphi}))$.
\end{proof}
For each propositional variable $p$, we define an arithmetical interpretation $f_0$ by $f_0(p) \equiv \exists x \exists y( x \in W_y \wedge \lambda(x) \wedge x \neq 0 \wedge x \in v_y(p))$.

\begin{cl}\label{cl 5.15}\label{EN3}
Let $B \in \MF$ and $i \in W_k$.
\begin{enumerate}
\item
If $i \in v_k(B)$, then $\PA \vdash \lambda(\num{i}) \to f_0(B)$.
\item 
If $i \notin v_k(B)$, then $\PA \vdash \lambda(\num{i}) \to \neg f_0(B)$. 
\end{enumerate}
\end{cl}
\begin{proof}
We prove Clauses 1 and 2 simultaneously by induction on the construction of $B$. 
We prove only the case $B \equiv \Box C$.

1. Suppose $i \in v_k(\Box C)$.
Then, we obtain $v_k(C) \in N_k(i)$, and by the induction hypothesis, it follows that $\PA \vdash \lambda(\num{j}) \to f_0(C)$ for any $j \in v_k(C)$ and $\PA \vdash \lambda(\num{j}) \to \neg f_0(C)$ for any $j \in W_k \setminus v_k(C)$.
Since the set $W_k$ is finite,
there exists $p \in \omega$ such that 
% for any $j \in v_k(C)$, $\lambda(\num{j}) \to f_0(C) \in P_{T,p}$ and for any $j \in W_k \setminus v_k(C)$, $\lambda(\num{j}) \to \neg f_0(C) \in P_{T,p}$.
$\PA$ proves the following:
\begin{align}\label{5.1}
\ v_k(C) \in N_k(i) & \wedge \forall y \in v_k(C)  (\lambda(y) \to f_0(C)  \in P_{T,p}) \notag \\  
& \wedge \forall y \in W_k \setminus v_k(C)(\lambda(y) \to \neg f_0(C) \in P_{T,p}).
\end{align}

We work in $\PA+\lambda(\num{i})$:
Let $s$ be such that $h(s)=0$ and $h(s+1) = i \neq 0$.
Since   (\ref{5.1}), $f_0(C) \leftrightarrow f_0(C) \in P_{T,s-1}$, and $s-1 \geq p$ hold,  we obtain $f_0(C) \in Y$.
Therefore, $f_0(C)$ is output by $g_0$ in Procedure 2, that is, $\PR_{g_0}(\gn{f_0(C)})$ holds.

2. Suppose $i \notin v_k(\Box C)$. 
Then, $v_k(C) \notin N_k(i)$ holds, and for each $V \in N_k(i)$, we have either $V \not\subseteq v_k(C)$ or $v_k(C) \not\subseteq V$.
Hence, there exists some $j$ such that 
$j \in V \setminus v_k(C)$ or
$j \in v_k(C) \setminus V$.
By the induction hypothesis, this implies that 
\begin{itemize}
  \item there exists $j \in V$ such that $\PA \vdash \lambda(\num{j}) \to \neg f_0(C)$ or
  \item there exists $j \in W_k \setminus V$ such that $\PA \vdash \lambda(\num{j}) \to f_0(C)$.
\end{itemize}
Since the set $N_k(i)$ is finite, there exists a number $p \in \omega$ such that $\PA$ proves the following:
\begin{align}\label{5.2}
\forall V  \in N_k(i) [& \exists y \in V(\lambda(y) \to \neg f_0(C) \in P_{T,p}) \notag \\ 
& \vee \exists y \in W_k \setminus V(\lambda(y) \to  f_0(C) \in P_{T,p})]. 
\end{align}
We work in $\PA + \lambda(\num{i})$:
Let $s$ be such that $h(s)=0$ and $h(s+1)=i \in W_k$.
We prove that $f_0(C)$ is not output by $g_0$, that is, $f_0(C) \notin P_{T,s-1} \cup X \cup Y$.
Suppose, towards a contradiction, that $f_0(C) \in P_{T,s-1} \cup X \cup Y$. We consider the following two cases.
\begin{enumerate}
    \item[(i)] $f_0(C) \in P_{T,s-1} \cup X$:
    Then, $P_{T,s-1} \tc f_0(C)$ holds.
Since $W_k \in N_k(i)$,
 there exists $j \in W_k$ such that $\lambda(\num{j}) \to \neg f_0(C) \in P_{T,s-1}$ by (\ref{5.2}). Therefore, $\neg \lambda(\num{j})$ is a t.c.~of $P_{T,s-1}$. This contradicts $h(s)=0$.
\item[(ii)] 
$f_0(C) \in Y$:
Then, there exists $V \in N_k(i)$ such that $P_{T,s-1} \tc \lambda(\num{j}) \to f_0(C)$ for any $j \in V$, and 
 $P_{T,s-1} \tc \lambda(\num{j}) \to \neg f_0(C)$ for any $j \in W_k \setminus V$.
By (\ref{5.2}), there exists $j \in V$ such that $P_{T,s-1} \tc \lambda(\num{j}) \to \neg f_0(C)$. Since $P_{T,s-1} \tc \lambda(\num{j}) \to f_0(C)$ holds, we obtain $P_{T,s-1} \tc \neg \lambda(\num{j})$. This contradicts $h(s)=0$.
\end{enumerate}
Thus, we obtain $f_0(C) \notin P_{T,s-1} \cup X \cup Y$, and $f_0(C)$ is not output by $g_0$.
\end{proof}
We complete our proof of Theorem \ref{thmEN}. 
By Claims \ref{EN2}, \ref{ENP}, and  \ref{END} the implication $(\Rightarrow)$ of Theorems \ref{thmEN}.1 and \ref{thmEN}.2 is obvious.
We prove the implication $(\Leftarrow)$ of Theorems \ref{thmEN}.1 and \ref{thmEN}.2. Suppose $L \nvdash A$. Then, $A \equiv A_k$ for some $k \in \omega$ and $i \notin v_k(A)$ for some $i \in W_k$. Hence we obtain $\PA \vdash \lambda(\num{i}) \to \neg f_0(A)$ by Claim \ref{EN3}. Thus, we obtain $T \nvdash f_0(A)$ by Proposition \ref{Prop:h}.3.
\end{proof}
 % \item $\mathsf{EN} \vdash A$.
%     \item
%     $A \in \PL(\PR)$ for any provability predicate $\PR(x)$ of $T$ satisfying $\mathbf{E}$. 
%     \item 
%     $A \in \PL(\PR)$ for any $\Sigma_1$ provability predicate $\PR(x)$ of $T$ satisfying $\mathbf{E}$. 
% \end{itemize}
\begin{cor}[The arithmetical completeness of $\mathsf{EN}$]
\begin{align*}
    \EN & = \bigcap \{ \PL(\PR_T) \mid \PR_T(x) \text{ is a provability predicate satisfying } \mathbf{E} \}\\
    & =  \bigcap \{ \PL(\PR_T) \mid \PR_T(x) \text{ is a } \Sigma_1 \text{ provability predicate satisfying } \mathbf{E} \}.
\end{align*}
Moreover, there exists a $\Sigma_1$ provability predicate $\PR_T(x)$ of $T$ such that $\mathsf{EN} = \PL(\PR_T)$.
\end{cor}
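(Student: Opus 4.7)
The plan is to reduce the corollary to Theorem \ref{thmEN} applied with $L = \EN$, which already does the heavy lifting. Let $B$ and $C$ denote the two intersections appearing in the displayed equality. The inclusion $B \subseteq C$ is immediate, since every $\Sigma_1$ provability predicate satisfying $\mathbf{E}$ is a fortiori a provability predicate satisfying $\mathbf{E}$, so the indexing family for $C$ is a subfamily of that for $B$; consequently the intersection defining $C$ is taken over fewer sets and is therefore no smaller.

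For the soundness direction $\EN \subseteq B$, I would argue by induction on the length of an $\EN$-proof that $T \vdash f(A)$ for every arithmetical interpretation $f$ based on an arbitrary provability predicate $\PR_T(x)$ satisfying $\mathbf{E}$. Tautologies and $\mathsf{MP}$ are handled by the first-order logic of $T$; $\mathsf{Nec}$ uses only the definition of a provability predicate, since $T \vdash f(A)$ implies $\PA \vdash \PR_T(\gn{f(A)})$, i.e., $T \vdash f(\Box A)$; and $\mathsf{RE}$ is precisely the content of condition $\mathbf{E}$. This gives $\EN \subseteq B$ and hence $\EN \subseteq C$.

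For the converse inclusion together with the moreover clause, Theorem \ref{thmEN} applied with $L = \EN$ supplies a $\Sigma_1$ provability predicate $\PR_T^{*}(x)$ of $T$ satisfying $\mathbf{E}$, together with an arithmetical interpretation $f_0$ based on $\PR_T^{*}$, such that $\EN \vdash A$ if and only if $T \vdash f_0(A)$ for every $A \in \MF$. Combining the two clauses of that theorem yields $\PL(\PR_T^{*}) = \EN$, which establishes the moreover statement. Since $\PR_T^{*}$ belongs to the indexing family of $C$, we obtain $C \subseteq \PL(\PR_T^{*}) = \EN$, closing the chain $\EN \subseteq B \subseteq C \subseteq \EN$. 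I do not anticipate any genuine obstacle here: the corollary is a bookkeeping consequence of Theorem \ref{thmEN}, and the only conceptual check is the routine soundness verification that turns $\mathbf{E}$ into the validity of the rule $\mathsf{RE}$ under every arithmetical interpretation.
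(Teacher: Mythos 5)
Your proposal is correct and matches the paper's intent exactly: the paper states this corollary without proof as an immediate consequence of Theorem \ref{thmEN}, and your derivation (general soundness of $\EN$ for any predicate satisfying $\mathbf{E}$ via induction on proofs, plus the chain $\EN \subseteq B \subseteq C \subseteq \PL(\PR_T^{*}) = \EN$ using the specific predicate and interpretation from the theorem) is the standard bookkeeping the author leaves implicit. No gaps.
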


\begin{cor}[The arithmetical completeness of $\mathsf{ENP}$]
\begin{align*}
    \ENP & = \bigcap \{ \PL(\PR_T) \mid \PR_T(x) \text{ satisfies } \mathbf{E} \text{ and } T \vdash \Con^L_T \},\\
    & = \bigcap \{ \PL(\PR_T) \mid \PR_T(x) \text{ is } \Sigma_1 \text{ and satisfies } \mathbf{E} \text{ and } T \vdash \Con^L_T \}.
\end{align*}
Moreover, there exists a $\Sigma_1$ provability predicate $\PR_T(x)$ of $T$ such that $\mathsf{ENP} = \PL(\PR_T)$.
\end{cor}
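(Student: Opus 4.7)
The plan is to derive the corollary directly from Theorem~\ref{thmEN} (specialized to $L = \ENP$) together with a routine soundness verification. Write $\mathcal{S}_1$ for the class of provability predicates $\PR_T(x)$ satisfying $\mathbf{E}$ with $T \vdash \Con^L_T$, and let $\mathcal{S}_2 \subseteq \mathcal{S}_1$ be the subclass of $\Sigma_1$ such predicates. Since $\mathcal{S}_2 \subseteq \mathcal{S}_1$, one automatically has $\bigcap_{\PR_T \in \mathcal{S}_1}\PL(\PR_T) \subseteq \bigcap_{\PR_T \in \mathcal{S}_2}\PL(\PR_T)$, so the task reduces to establishing $\ENP \subseteq \bigcap_{\PR_T \in \mathcal{S}_1}\PL(\PR_T)$ (soundness), $\bigcap_{\PR_T \in \mathcal{S}_2}\PL(\PR_T) \subseteq \ENP$ (completeness), and exhibiting a single $\Sigma_1$ provability predicate whose provability logic equals $\ENP$ on the nose.

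For soundness, I would fix an arbitrary $\PR_T \in \mathcal{S}_1$ and an arithmetical interpretation $f$ based on $\PR_T$, and argue by induction on the length of an $\ENP$-proof. Propositional tautologies and $(\mathsf{MP})$ are immediate; $(\mathsf{Nec})$ uses only that $\PR_T(x)$ is a provability predicate, so $T \vdash f(A)$ yields $\PA \vdash \PR_T(\gn{f(A)})$ and hence $T \vdash f(\Box A)$; the rule $(\mathsf{RE})$ follows by applying condition $\mathbf{E}$ to the inductive hypothesis $T \vdash f(A) \leftrightarrow f(B)$ to obtain $T \vdash \PR_T(\gn{f(A)}) \leftrightarrow \PR_T(\gn{f(B)})$. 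Finally, the only new axiom of $\ENP$ over $\EN$ is $\neg \Box \bot$, whose $f$-image is precisely $\neg \PR_T(\gn{0=1}) \equiv \Con^L_T$, which is provable in $T$ by the hypothesis $\PR_T \in \mathcal{S}_1$.

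For completeness and the moreover clause, I apply Theorem~\ref{thmEN} to $L = \ENP$, obtaining a $\Sigma_1$ provability predicate $\PR_0(x)$ satisfying $\mathbf{E}$ together with a distinguished interpretation $f_0$. By Claim~\ref{ENP}, $\PA \vdash \neg \PR_0(\gn{0=1})$, so $T$ proves its own $\Con^L_T$ relative to $\PR_0$, placing $\PR_0 \in \mathcal{S}_2$. Clause~1 of Theorem~\ref{thmEN} (combined with the soundness step above) gives $\ENP \subseteq \PL(\PR_0)$, while clause~2 supplies $f_0$ with the property that $T \vdash f_0(A)$ implies $\ENP \vdash A$; hence if $A \in \PL(\PR_0)$, then $T \vdash f_0(A)$ and so $\ENP \vdash A$, yielding $\PL(\PR_0) = \ENP$. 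This single identity simultaneously discharges the moreover statement and forces $\bigcap_{\PR_T \in \mathcal{S}_2}\PL(\PR_T) \subseteq \PL(\PR_0) = \ENP$, closing the chain of inclusions. No genuinely new obstacle arises here; the only point to watch is lining up the defining conditions of $\mathcal{S}_1$ and $\mathcal{S}_2$ with the properties already established in Claims~\ref{EN2} and~\ref{ENP}, since all the substantive arithmetical construction has been done inside Theorem~\ref{thmEN}.
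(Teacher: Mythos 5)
Your proposal is correct and follows exactly the route the paper intends: the paper states this corollary without proof as an immediate consequence of Theorem \ref{thmEN} (with $L = \ENP$) together with the routine soundness induction over $\ENP$-derivations, which is precisely the decomposition you carry out. The only substantive ingredients --- Claims \ref{EN2} and \ref{ENP} placing the constructed predicate in the relevant class, and clause 2 of Theorem \ref{thmEN} forcing $\PL(\PR_0) = \ENP$ --- are identified and used correctly.
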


\begin{cor}[The arithmetical completeness of $\mathsf{END}$]
\begin{align*}
    \END & = \bigcap \{ \PL(\PR_T) \mid \PR_T(x) \text{ satisfies } \mathbf{E} \text{ and } T \vdash \Con^S_T \},\\
    & = \bigcap \{ \PL(\PR_T) \mid \PR_T(x) \text{ is } \Sigma_1 \text{ and satisfies } \mathbf{E} \text{ and } T \vdash \Con^S_T \}.
\end{align*}
Moreover, there exists a $\Sigma_1$ provability predicate $\PR_T(x)$ of $T$ such that $\mathsf{END} = \PL(\PR_T)$.
\end{cor}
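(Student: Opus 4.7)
The plan is to derive this corollary directly from Theorem~\ref{thmEN} applied with $L = \mathsf{END}$, supplemented by a routine soundness argument. Since the second intersection ranges over a strictly narrower class of predicates (only the $\Sigma_1$ ones), the inclusion $\bigcap_{\text{first}} \subseteq \bigcap_{\text{second}}$ is automatic, so the chain of equalities reduces to two containments: (a) $\mathsf{END} \subseteq \bigcap_{\text{first}}$, which is arithmetical soundness, and (b) $\bigcap_{\text{second}} \subseteq \mathsf{END}$, which is arithmetical completeness.

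For (a), I would prove the soundness of $\mathsf{END}$ by induction on the length of an $\mathsf{END}$-derivation of $A$, fixing any provability predicate $\PR_T(x)$ satisfying $\mathbf{E}$ with $T \vdash \Con^S_T$ and any arithmetical interpretation $f$ based on it. Tautologies and $(\mathsf{MP})$ are handled because $f$ commutes with the Boolean connectives; the rule $(\mathsf{Nec})$ uses the defining property that $T \vdash f(A)$ implies $\PA \vdash \PR_T(\gn{f(A)}) \equiv f(\Box A)$; the rule $(\mathsf{RE})$ is exactly the condition $\mathbf{E}$. The only axiom beyond those of $\mathsf{EN}$ is the scheme $\neg(\Box A \wedge \Box \neg A)$, whose $f$-image $\neg(\PR_T(\gn{f(A)}) \wedge \PR_T(\gn{\neg f(A)}))$ is an instance of $\Con^S_T$ and hence $T$-provable by assumption.

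For (b) together with the ``moreover'' clause, I would invoke Theorem~\ref{thmEN} with $L = \mathsf{END}$ to produce a $\Sigma_1$ provability predicate $\PR_T(x)$ satisfying $\mathbf{E}$ and an arithmetical interpretation $f$ such that $\mathsf{END} \vdash A$ iff $T \vdash f(A)$ for every $A \in \MF$. The crucial additional fact is that Claim~\ref{END} inside the proof of Theorem~\ref{thmEN} guarantees this very $\PR_T(x)$ satisfies $T \vdash \Con^S_T$, so it lies in the class defining the second intersection. Thus if $A \in \bigcap_{\text{second}}$, then in particular $A \in \PL(\PR_T)$, so $T \vdash f(A)$, whence $\mathsf{END} \vdash A$; the ``moreover'' clause is witnessed by this same $\PR_T(x)$, since (a) yields $\mathsf{END} \subseteq \PL(\PR_T)$ and $f$ yields the reverse inclusion. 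No genuine obstacle remains once Theorem~\ref{thmEN} is in hand --- this corollary is essentially a packaging result --- but one should be careful to invoke Theorem~\ref{thmEN} through its proof rather than its bare statement, since the matching of $\neg(\Box A \wedge \Box \neg A)$ with $\Con^S_T$ depends on the particular construction of $\PR_T(x)$ recorded in Claim~\ref{END}.
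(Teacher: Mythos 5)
Your proposal is correct and matches the paper's intent: the corollary is stated without a separate proof precisely because it is the standard packaging of Theorem~\ref{thmEN} (soundness by induction on $\mathsf{END}$-derivations, with $\neg(\Box A \wedge \Box\neg A)$ handled by $T \vdash \Con^S_T$, and completeness via the constructed $\PR_{g_0}$ and $f_0$). Your remark that one must read Claim~\ref{END} inside the proof, rather than the bare statement of the theorem, to see that the constructed predicate satisfies $T \vdash \Con^S_T$ is exactly the right point of care.
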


% \begin{cor}[Arithmetical completeness of $\mathsf{ENP}$ ]
% For any $A \in \MF$, the following are equivalent:
% \begin{itemize}
%     \item $\mathsf{ENP} \vdash A$.
%     \item
%     $A \in \PL(\PR)$ for any provability predicate $\PR(x)$ of $T$ satisfying $\mathbf{E}$ and $T \vdash \neg \PR(\gn{0=1})$. 
%     \item 
%     $A \in \PL(\PR)$ for any $\Sigma_1$ provability predicate $\PR(x)$ of $T$ satisfying $\mathbf{E}$ and $T \vdash \neg \PR(\gn{0=1})$. 
% \end{itemize}
% Moreover, there exists a $\Sigma_1$ provability predicate $\PR(x)$ of $T$ such that $\mathsf{ENP} = \PL(\PR)$.
% \end{cor}

% \begin{cor}[Arithmetical completeness of $\mathsf{END}$ ]
% For any $A \in \MF$, the following are equivalent:
% \begin{itemize}
%     \item $\mathsf{END} \vdash A$.
%     \item
%     $A \in \PL(\PR)$ for any provability predicate $\PR(x)$ of $T$ satisfying $\mathbf{E}$ and $T \vdash \neg (\PR(\gn{\varphi}) \wedge \PR(\gn{\neg \varphi}))$ for any formula $\varphi$. 
%     \item 
%     $A \in \PL(\PR)$ for any $\Sigma_1$ provability predicate $\PR(x)$ of $T$ satisfying $\mathbf{E}$ and $T \vdash \neg (\PR(\gn{\varphi}) \wedge \PR(\gn{\neg \varphi}))$ for any formula $\varphi$. 
% \end{itemize}
% Moreover, there exists a $\Sigma_1$ provability predicate $\PR(x)$ of $T$ such that $\mathsf{END} = \PL(\PR)$.
% \end{cor}

\section{Arithmetical completeness of $\mathsf{ECN}$ and $\mathsf{ECNP}$}\label{pr2}
In this section, we prove the arithmetical completeness theorems for $\mathsf{ECN}$ and $\mathsf{ECNP}$.
\begin{thm}\label{thmECN}
For $L \in \{ \mathsf{ECN}, \mathsf{ENP}  \}$, 
there exists a $\Sigma_1$ provability predicate $\PR_T(x)$ of $T$ satisfying $\mathbf{E}$ such that
\begin{enumerate}
    \item 
    for any $A \in \MF$ and any arithmetical interpretation $f$ based on $\PR_T(x)$, if $L \vdash A$, then $T \vdash f(A)$, and
    \item 
    there exists an arithmetical interpretation $f$ based on $\PR_T(x)$ such that for any $A \in \MF$, $L \vdash A$ if and only if $T \vdash f(A)$.
\end{enumerate}

\end{thm}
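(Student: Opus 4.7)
The plan is to repeat the construction of Theorem \ref{thmEN} with the $\ECN$-frame property in hand. By Corollary \ref{Lew2} I would primitive recursively enumerate $\langle A_k\rangle_{k\in\omega}$, the $L$-unprovable formulas, together with finite $L$-models $(W_k,N_k,v_k)$ refuting them; one may assume $\{W_k\}$ is pairwise disjoint with $\bigcup_k W_k=\omega\setminus\{0\}$, and that each $N_k(i)$ is closed under binary intersection (and, for $L=\ECNP$, also $\emptyset\notin N_k(i)$). The function $g_0$, the interpretation $f_0$, and the predicate $\PR_{g_0}(x)\equiv\exists y\,(g_0(y)=x)$ are set up exactly as in Section \ref{pr1}, using Solovay's function $h$ to switch from Procedure 1 (outputting $T$-proofs) to Procedure 2 (outputting members of a finite set built from $P_{T,s-1}$ and $N_k(i)$) at the first stage where $h(s+1)\neq 0$.

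The essential new requirement is that $\PR_{g_0}$ validate $\Box A\wedge\Box B\to\Box(A\wedge B)$, i.e.\ that it satisfy $\mathbf{C}$. To arrange this I would enrich the definition of $Y$ (or close the enumerated set under conjunction) so that whenever $\varphi$ is witnessed by $V_1\in N_k(i)$ and $\psi$ by $V_2\in N_k(i)$, the conjunction $\varphi\wedge\psi$ is witnessed by $V_1\cap V_2\in N_k(i)$. This is exactly where the $\ECN$ frame condition is used: on $V_1\cap V_2$ both $\lambda(\num{j})\to\varphi$ and $\lambda(\num{j})\to\psi$ lie in $P_{T,s-1}$, hence so does $\lambda(\num{j})\to\varphi\wedge\psi$; on its complement at least one of $\lambda(\num{j})\to\neg\varphi$, $\lambda(\num{j})\to\neg\psi$ lies in $P_{T,s-1}$, hence tautologically so does $\lambda(\num{j})\to\neg(\varphi\wedge\psi)$. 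The mixed cases $\varphi\in X$, $\psi\in Y$ and $\varphi,\psi\in X$ are reduced to the above using $W_k\in N_k(i)$ to place any $X$-member into $Y$ with witness $W_k$.

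The verification then consists of four claims parallel to Theorem \ref{thmEN}: (i) $\PA+\Con_T\vdash\PR_{g_0}(\gn{\varphi})\leftrightarrow\Prov_T(\gn{\varphi})$, so that $\PR_{g_0}$ is a $\Sigma_1$ provability predicate of $T$ (as in Claim \ref{EN1}); (ii) $\PR_{g_0}$ satisfies $\mathbf{E}$ (as in Claim \ref{EN2}); (iii) $\PR_{g_0}$ satisfies $\mathbf{C}$, proved by splitting on $\Con_T$ vs.\ $\neg\Con_T$ and using the sketch above on the $\neg\Con_T$ side; and (iv) for $L=\ECNP$, $\PA\vdash\neg\PR_{g_0}(\gn{0=1})$, inherited essentially unchanged from Claim \ref{ENP}. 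The truth lemma (the analogue of Claim \ref{EN3}) carries over because enriching $X\cup Y$ keeps every witness inside $N_k(i)$, so $v_k(C)\notin N_k(i)$ still forbids $f_0(C)$ from being output. Completeness of $(\Leftarrow)$ then follows from the truth lemma applied at a world $i\in W_k$ refuting $A_k$, exactly as at the end of Section \ref{pr1}.

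The main obstacle I expect is step (iii). A subtle point is the congruence of $\leftrightarrow_{s-1}$ under conjunction: from $\varphi\leftrightarrow_{s-1}\varphi'$ and $\psi\leftrightarrow_{s-1}\psi'$ one wants $\varphi\wedge\psi\leftrightarrow_{s-1}\varphi'\wedge\psi'$, which requires lifting a chain of short equivalences to a chain on the conjunction. This is where the non-standardness of $s$ guaranteed by Proposition \ref{Prop:h}.4 is used: a standard amount of additional propositional reasoning still fits inside $P_{T,s-1}$ when $s$ is non-standard, so the chains of equivalences needed for the $\mathbf{C}$ argument can be constructed within the stage bound.
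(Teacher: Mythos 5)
Your proposal is correct and follows essentially the same route as the paper: the paper closes the Procedure~2 output set under conjunction modulo $\leftrightarrow_{s-1}$ (via an inductively defined hierarchy $Z_0 = P_{T,s-1}\cup X\cup Y$, $Z_{n+1}$ = formulas equivalent to conjunctions from $\bigcup_{j\le n}Z_j$), and its key lemma is exactly the invariant you identify — every member of the closure retains a witness $V\in N_k(i)$, with the induction step using $V_0\cap V_1\in N_k(i)$ on the witness side and $(W_k\setminus V_0)\cup(W_k\setminus V_1)$ on the complement side, and $X$-members handled with witness $W_k$. The remaining claims ($\mathbf{E}$, $\mathbf{C}$, the $\ECNP$ consistency claim, and the truth lemma) are then verified just as you describe.
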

\begin{proof}
Let $L \in \{ \mathsf{ECN}, \mathsf{ECNP} \}$.
As in the proof of Theorem \ref{thmEN},
we obtain 
 a primitive recursive enumeration $\langle (W_k, N_k, v_k) \rangle_{k \in \omega}$ of pairwise disjoint finite $L$-models $(W_k, N_k,v_k)$ falsifying $A_k$ for any $k \in \omega$, where  $\langle A_k \rangle_{k \in \omega}$ is an enumeration of all $L$-unprovable formulas.
 Here, we may assume that $\bigcup_{k \in \omega} W_k= \omega \setminus \{0\}$.
We define the primitive recursive function  $g_1$ as follows.
\medskip

\textbf{Procedure 1.}

Stage $s$:
\begin{itemize}
\item If $h(s+1) =0$,
\begin{equation*}
  g_1(s)  = \begin{cases}
       \varphi & \text{if}\ s\ \text{is a}\ T \text{-proof of}\ \varphi, \\
               0 & \text{otherwise}.
             \end{cases}
  \end{equation*}

Then, go to Stage $s+1$.

\item If $h(s+1) \neq 0$, go to Procedure 2.
\end{itemize}

\textbf{Procedure 2.}

Suppose $s$ and $i \neq 0$ satisfy $h(s)=0$ and $h(s+1)=i$. 
Let $k$ be a number such that $i \in W_k$. 
Let 
\[
X = \{ \psi \in F_{s-1} \mid \exists \varphi \in P_{T,s-1} \ \text{such that} \ \varphi \leftrightarrow_{s-1} \psi \}
\]
and 
\begin{align*}
Y = \{ \psi \in F_{s-1} \mid  \exists \varphi & \in P_{T,s-1} \ \exists V \in N_k(i) [ \forall j \in V (\lambda(\num{j}) \to \varphi \in P_{T,s-1}) \\
& \& \  \forall j \in W_k \setminus V (\lambda(\num{j}) \to \neg \varphi \in P_{T,s-1})  \ \& \ \varphi \leftrightarrow_{s-1} \psi \ ]\}.
\end{align*}

For each $n \in \omega$, we define the sequence $\{Z_n\}_{n \in \omega}$ of sets of $\LA$-formulas
 inductively as follows:
\begin{itemize}
\item
$Z_0 :=  P_{T,s-1} \cup X \cup Y$.
\item
$Z_{n+1} := \{ \rho \in F_{s-1} \mid \exists \varphi, \exists \psi \in \bigcup_{j \leq n} Z_j   (\varphi \wedge \psi \leftrightarrow_{s-1} \rho)   \}$.
\end{itemize}
We define the set $Z$ as $\bigcup_{n \in \omega} Z_n$, which is a subset of $F_{s-1}$.
Let $\chi_{0}, \ldots , \chi_{l-1}$ be the enumeration of all formulas in $Z$.
\begin{itemize}
    \item 
For each $t< l$, let $g_1(s+t) = \chi_t$.
    \item
For each $t \geq l$, let $g_1(s+t) = 0$.   
\end{itemize}
The construction of the function $g_1$ has been finished.  We define $\PR_{g_1}(x) \equiv \exists y (g_1(y)=x)$. Note that in Procedure 2, a formula $\varphi$ is output by $g_1$ if and only if $\varphi \in Z$.

The following claim is proved as in the proof of Claim \ref{EN1}.
\begin{cl}\label{ECN1}
Let $\varphi$ be an $\LA$-formula. Then, $\PA + \Con_T \vdash \PR_{g_1}(\gn{\varphi}) \leftrightarrow \Prov_T (\gn{\varphi})$.
\end{cl}
The condition $\mathbf{E}$ for $\PR_{g_1}(x)$ is proved as in the proof of Claim \ref{EN2}.
\begin{cl}\label{ECN2}
For any $\LA$-formulas $\varphi$ and $\psi$, if $T \vdash \varphi \leftrightarrow \psi$, then $\PA \vdash \PR_{g_1}(\gn{\varphi}) \leftrightarrow \PR_{g_1}(\gn{\psi})$ holds.    
\end{cl}
We prove $\PR_{g_1}(x)$ satisfies the condition $\mathbf{C}$.
\begin{cl}\label{ECN3}
For any formulas $\varphi$ and $\psi$, $\PA \vdash \PR_{g_1}(\gn{\varphi}) \wedge \PR_{g_1}(\gn{\psi}) \to \PR_{g_1}(\gn{\varphi \wedge \psi})$ holds.
    \end{cl}
\begin{proof}
Let $\varphi$ and $\psi$ be any $\LA$-formulas.
We obtain $\PA + \Con_T \vdash \PR_{g_1}(\gn{\varphi}) \wedge \PR_{g_1}(\gn{\psi}) \to \PR_{g_1}(\gn{\varphi \wedge \psi})$ by $\mathbf{C}$ for $\Prov_T(x)$ and Claim \ref{ECN1}.

We prove $\PA + \exists x (\lambda(x) \wedge x \neq 0) \vdash \PR_{g_1}(\gn{\varphi}) \wedge \PR_{g_1}(\gn{\psi}) \to \PR_{g_1}(\gn{\varphi \wedge \psi})$. We work in $\PA$: Let $s$, $i$, and $k$ be such that $h(s)=0$ and $h(s+1) = i \in W_k$. Suppose $\PR_{g_1}(\gn{\varphi})$ and $\PR_{g_1}(\gn{\psi})$ hold. Then, we obtain $\varphi, \psi \in Z$, and there exists some $n$ such that $\varphi, \psi \in \bigcup_{j \leq n} Z_{j}$. 
Since $\varphi \wedge \psi \in F_{s-1}$, it follows that
 $\varphi \wedge \psi \in Z_{n+1} \subseteq Z$. Thus $\varphi \wedge \psi$ is output by $g_1$, that is,  $\PR_{g_1}(\gn{\varphi \wedge \psi})$ holds.

Thus, by the law of excluded middle, we obtain $\PA \vdash \PR_{g_1}(\gn{\varphi}) \wedge \ \PR_{g_1}(\gn{\psi}) \to \PR_{g_1}(\gn{\varphi \wedge \psi})$.
\end{proof}

\begin{cl}\label{ECN4}
Let $\rho$ be an $\LA$-formula. 
$\PA$ proves the following:
``Suppose $h(s)=0$ and $h(s+1)=i \in W_k$. For any number $n$, if $\rho \in Z_n$, then 
there exists $V \in N_k(i)$ such that $P_{T,s-1} \tc \lambda(\num{j}) \to \rho$ for any $j \in V$, and  $P_{T,s-1} \tc \lambda(\num{j}) \to \neg \rho$ for any $j \in W_k \setminus V$.''
\end{cl}
\begin{proof}
Let $\rho$ be any formula. We work in $\PA$: Let $h(s)=0$ and $h(s+1)=i \in W_k$.
We prove the statement by  induction on $n$.
We prove the base step $n=0$.
If $n=0$, then $\rho \in Z_0$, that is, $\rho \in P_{T,s-1} \cup X \cup Y$.
If $\rho \in P_{T,s-1} \cup X$, then $P_{T,s-1} \tc \rho$.
Since $(W_k,N_k)$ is an $\EN$-frame, we obtain $W_k \in N_k(i)$ and $P_{T,s-1} \tc \lambda(\num{j}) \to \rho$ for any $j \in W_k$.
If $\rho \in Y$, then by the definition of $Y$, there exists a set $V \in N_k (\num{i})$ such that  $P_{T,s-1} \tc \lambda(\num{j}) \to \rho$ for any $j \in V$ and  $P_{T,s-1} \tc \lambda(\num{j}) \to \neg \rho$ for any $j \in W_k \setminus V$.

Next, we prove the induction step $n+1$. Suppose $\rho \in Z_{n+1}$. Then, there exist $\varphi, \psi \in \bigcup_{j \leq n}Z_j$ such that $\varphi \land \psi \leftrightarrow_{s-1} \rho$. Since $\varphi, \psi \in \bigcup_{j \leq n}Z_j$, by the induction hypothesis, there exist $V_0, V_1 \in N_k(i)$ such that
% \begin{enumerate}
%     \item[(i)]
% $\varphi$ and $\psi$ are t.c.'s of $P_{T,s-1}$.
% In this case, $\varphi \wedge \psi$ is a t.c.~of $P_{T,s-1}$.
% Thus, by $\rho \leftrightarrow_{s-1} \varphi \wedge \psi$, it follows that $\rho$ is a t.c.~of $P_{T,s-1}$.
% \item[(ii)] 
% $\varphi$ is a t.c.~of $P_{T,s-1}$ and  $\exists V \in N_k(i) \ \bigl( \forall j \in V ( P_{T,s-1} \tc \lambda(\num{j}) \to \psi$) \& \\ $\forall j \in W_k \setminus V( P_{T,s-1} \tc \lambda(\num{j}) \to \neg \psi)\bigr)$.
% Since $\rho \leftrightarrow_{s-1} \varphi \wedge \psi$  and $P_{T,s-1} \tc \varphi$ hold, we obtain 
% \[
% P_{T,s-1} \tc \rho \leftrightarrow \psi.
% \]
% Therefore, it follows that for any $j \in V$,
% \[
% P_{T,s-1} \tc \lambda(\num{j}) \to \rho,
% \]
% and for any $j \in W_k \setminus V$,
% \[
% P_{T,s-1} \tc \lambda(\num{j}) \to \neg \rho.
% \]
%     \item[(iii)] 
% $\psi$ is a t.c.~of $P_{T,s-1}$ and  $\exists V \in N_k(i) \ \bigl( \forall j \in V ( P_{T,s-1} \tc \lambda(\num{j}) \to \varphi$) \& \\ $\forall j \in W_k \setminus V( P_{T,s-1} \tc \lambda(\num{j}) \to \neg \varphi)\bigr)$.
% This case is proved in the same way as (ii).
\begin{itemize}
    \item 
\(\forall j \in V_0 \ ( P_{T,s-1} \tc \lambda(\num{j}) \to \psi )\) \&  
\(\forall j \in W_k \setminus V_0 \ ( P_{T,s-1} \tc \lambda(\num{j}) \to \neg \psi )\), and 
    \item 
\(\forall j \in V_1 \ ( P_{T,s-1} \tc \lambda(\num{j}) \to \varphi )\) \&  
\(\forall j \in W_k \setminus V_1 \ ( P_{T,s-1} \tc \lambda(\num{j}) \to \neg \varphi )\).
\end{itemize}

Since $V_0, V_1 \in N_k(i)$, the frame condition of $\mathsf{ECN}$ ensures that  $V_0 \cap V_1 \in N_k(i)$. Also, for any $j \in V_0 \cap V_1$,
\[
P_{T,s-1} \tc \lambda(\num{j}) \to \varphi \wedge \psi,
\]
which implies 
\[
P_{T,s-1} \tc \lambda(\num{j}) \to \rho,
\]
by $\rho \leftrightarrow_{s-1} \varphi \wedge \psi$. On the other hand, for any $j \in W_{k} \setminus (V_0 \cap V_1)$, that is, $j \in (W_k \setminus V_0) \cup (W_k \setminus V_1)$, we obtain 
\[
P_{T,s-1} \tc \lambda(\num{j}) \to \neg \varphi \vee \neg \psi,
\]
which implies 
\[
P_{T,s-1} \tc \lambda(\num{j}) \to \neg \rho.
\]
\qedhere
\end{proof}

\begin{cl}\label{ECNP}
If $L$ is $\mathsf{ECNP}$, then $\PA \vdash \neg \PR_{g_1}(\gn{0=1})$ holds.
    
\end{cl}
\begin{proof}
Since $\PA + \Con_T \vdash \neg \PR_{g_1}(\gn{0=1})$ by Claim \ref{ECN1}, it suffices to
 prove $\PA + \exists x (\lambda(x) \wedge x \neq 0) \vdash \neg \PR_{g_1}(\gn{0=1})$. We work in $\PA + \exists x (\lambda(x) \wedge x \neq 0)$: Suppose $h(s)=0$ and $h(s+1)= i \in W_k$.
Suppose, towards a contradiction, that 
$\PR_{g_1}(\gn{0=1})$ holds, that is, $0=1$ is output by $g_1$. Then, it follows that $0=1 \in Z$, which implies that there exists a number $n$ such that $0=1 \in Z_n$.
Thus, by Claim \ref{ECN4}, 
there exists $V \in N_k(i)$ such that  $P_{T,s-1} \tc \lambda(\num{j}) \to 0=1$ for any $j \in V$ and  $P_{T,s-1} \tc \lambda(\num{j}) \to 0 \neq 1$ for any $j \in W_k \setminus V$.
By the frame condition of $\mathsf{ENP}$, we obtain $\emptyset \notin N_k(i)$.
Then, there exists $j \in V$ such that $P_{T,s-1} \tc \lambda(\num{j}) \to 0=1$.
Since 
$0 \neq 1$ has a standard $T$-proof,
$P_{T,s-1} \tc 0 \neq 1$ holds.
Therefore, we obtain $P_{T,s-1} \tc \neg \lambda(\num{j})$.
This contradicts $h(s)=0$.

Thus, we obtain $\PA + \neg \Con_T \vdash \neg \PR_{g_1}(\gn{0=1})$.
By the law of excluded middle, it follows that $\PA \vdash \neg \PR_{g_1}(\gn{0=1})$.
\end{proof}
We define the arithmetical interpretation $f_1$ by
$f_1(p) \equiv \exists x \exists y( x \in W_y \wedge \lambda(x) \wedge x \neq 0 \wedge x \in v_y(p))$.
\begin{cl}\label{ECN5}
Let $B \in \MF$ and $i \in W_k$.
\begin{enumerate}
\item
If $i \in v_k(B)$, then $\PA \vdash \lambda(\num{i}) \to f_1(B)$.    
\item 
If $i \notin v_k(B)$, then $\PA \vdash \lambda(\num{i}) \to \neg f_1(B)$.
\end{enumerate}   
\end{cl}
\begin{proof}
We prove only the case $B \equiv \Box C$.
 
1. This case is proved in the same way as in (i) of the proof of Claim \ref{cl 5.15}.
 
2. Suppose $i \notin v_k(\Box C)$. 
We work in $\PA + \lambda(\num{i})$:
Let $s$ be such that $h(s)=0$ and $h(s+1)=i \in W_k$.
We prove that $f_1(C)$ is not output by $g_1$, that is, $f_1(C) \notin Z$.
Suppose, towards a contradiction, that $f_1(C) \in Z$. Then, there exists a number $n$ such that $f_1(C) \in Z_n$.
By Claim \ref{ECN4}, 
    there exists $V \in N_k(i)$ such that for any $j \in V$, $P_{T,s-1} \tc \lambda(\num{j}) \to f_1(C)$ and for any $j \in W_k \setminus V$, $P_{T,s-1} \tc \lambda(\num{j}) \to \neg f_1(C)$.
 In the same way as in the case (ii) of item (2) of Claim \ref{cl 5.15}, we obtain a contradiction.
Thus, it follows that $f_1(C) \notin Z$, and hence $f_1(C)$ is not output by $g_1$.
\qedhere    
\end{proof}
We finish our proof of Theorem \ref{thmECN}. The implication $(\Rightarrow)$ of Theorems \ref{thmECN}.1 and \ref{thmECN}.2 follows from Claims \ref{ECN2}, \ref{ECN3}, and \ref{ECNP}.
As in the proof of Theorem \ref{thmEN}, we obtain the implication $(\Leftarrow)$ of Theorems \ref{thmECN}.1 and  \ref{thmECN}.2 by Claim \ref{ECN5}.
\end{proof}

\begin{cor}[The arithmetical completeness of $\mathsf{ECN}$]
\begin{align*}
    \ECN & = \bigcap \{ \PL(\PR_T) \mid \PR_T(x) \text{ is a provability predicate satisfying } \mathbf{E} \text{ and } \mathbf{C} \}\\
    & =  \bigcap \{ \PL(\PR_T) \mid \PR_T(x) \text{ is a } \Sigma_1 \text{ provability predicate satisfying } \mathbf{E} \text{ and } \mathbf{C}  \}.
\end{align*}
Moreover, there exists a $\Sigma_1$ provability predicate $\PR_T(x)$ of $T$ such that $\mathsf{ECN} = \PL(\PR_T)$.
\end{cor}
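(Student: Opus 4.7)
The plan is to sandwich $\ECN$ between the two intersections on the right, combining a straightforward soundness argument with the arithmetical completeness already supplied by Theorem~\ref{thmECN}. Let $S_1$ denote the class of provability predicates of $T$ satisfying $\mathbf{E}$ and $\mathbf{C}$, and $S_2$ its $\Sigma_1$ subclass. Since $S_2 \subseteq S_1$, we get $\bigcap_{\PR_T \in S_1} \PL(\PR_T) \subseteq \bigcap_{\PR_T \in S_2} \PL(\PR_T)$ automatically, so it suffices to prove the outer inclusions $\ECN \subseteq \bigcap_{S_1} \PL(\PR_T)$ and $\bigcap_{S_2} \PL(\PR_T) \subseteq \ECN$.

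For the soundness inclusion, I would fix any $\PR_T(x) \in S_1$ and any arithmetical interpretation $f$ based on $\PR_T(x)$, and induct on an $\ECN$-derivation of $A$. Tautologies translate to first-order tautologies; modus ponens is preserved by $T$; for $(\mathsf{Nec})$, if $T \vdash f(A)$ then $\PA \vdash \PR_T(\gn{f(A)})$ by the definition of a provability predicate, which is $f(\Box A)$; the rule $(\mathsf{RE})$ is literally the content of $\mathbf{E}$ applied to $f(A)$ and $f(B)$; and the axiom $\mathsf{C}: \Box A \wedge \Box B \to \Box(A \wedge B)$ translates under $f$ to the implication guaranteed by the hypothesis $\mathbf{C}$ on $\PR_T(x)$.

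For the completeness inclusion, suppose $\ECN \nvdash A$. Theorem~\ref{thmECN} applied to $L = \ECN$ yields a specific $\Sigma_1$ provability predicate $\PR_T(x)$ together with an arithmetical interpretation $f$ based on it such that $\ECN \vdash A \iff T \vdash f(A)$. This predicate lies in $S_2$: it satisfies $\mathbf{E}$ by the theorem and $\mathbf{C}$ by Claim~\ref{ECN3}. From $\ECN \nvdash A$ we conclude $T \nvdash f(A)$, so $A \notin \PL(\PR_T)$, whence $A \notin \bigcap_{S_2} \PL(\PR_T)$.

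The ``Moreover'' clause is immediate from the same $\Sigma_1$ predicate: soundness gives $\ECN \subseteq \PL(\PR_T)$, and the dedicated interpretation supplied by part (2) of Theorem~\ref{thmECN} gives $\PL(\PR_T) \subseteq \ECN$ one formula at a time. I do not anticipate any genuine obstacle; all the combinatorial work, namely the construction of $g_1$ and the verification of $\mathbf{E}$, $\mathbf{C}$, and the truth lemma (Claim~\ref{ECN5}), is already done inside Theorem~\ref{thmECN}, and this corollary is routine bookkeeping wrapped around that result.
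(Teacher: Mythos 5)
Your proposal is correct and follows essentially the same route the paper intends: the corollary is exactly the standard soundness-by-induction argument (with $(\mathsf{Nec})$ handled by the definition of a provability predicate, $(\mathsf{RE})$ by $\mathbf{E}$, and the axiom $\mathsf{C}$ by $\mathbf{C}$) combined with Theorem \ref{thmECN} and Claim \ref{ECN3} for the converse inclusion and the ``Moreover'' clause. No gaps.
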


\begin{cor}[The arithmetical completeness of $\mathsf{ECNP}$]
\begin{align*}
    \ECNP & = \bigcap \{ \PL(\PR_T) \mid \PR_T(x) \text{ satisfies } \mathbf{E},  \mathbf{C}, \text{ and } T \vdash \Con^L_T \}\\
    & = \bigcap \{ \PL(\PR_T) \mid \PR_T(x) \text{ is } \Sigma_1 \text{ and satisfies } \mathbf{E}, \mathbf{C} \text{ and } T \vdash \Con^L_T \}.
\end{align*}
Moreover, there exists a $\Sigma_1$ provability predicate $\PR_T(x)$ of $T$ such that $\mathsf{ECNP} = \PL(\PR_T)$.
\end{cor}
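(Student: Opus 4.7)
The plan is to derive this corollary directly from Theorem \ref{thmECN} applied with $L = \mathsf{ECNP}$, together with a routine soundness check for the left-to-right inclusions. No new arithmetical construction is needed; the work has already been done in the proof of Theorem \ref{thmECN}.

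First I would establish the ``$\subseteq$'' direction of both equalities (which also gives the soundness half of the ``Moreover'' clause) by showing that any provability predicate $\PR_T(x)$ satisfying $\mathbf{E}$, $\mathbf{C}$, and $T \vdash \Con^L_T$ has $\mathsf{ECNP} \subseteq \PL(\PR_T)$. This is a standard induction on $\mathsf{ECNP}$-derivations: propositional tautologies and $(\mathsf{MP})$ are immediate; $(\mathsf{Nec})$ follows because $\PR_T(x)$ is a provability predicate of $T$; the rule $(\mathsf{RE})$ is precisely what $\mathbf{E}$ asserts; the axiom $\Box A \wedge \Box B \to \Box (A \wedge B)$ is validated by $\mathbf{C}$; and the axiom $\neg \Box \bot$ translates under any arithmetical interpretation to $\neg \PR_T(\gn{0=1}) \equiv \Con^L_T$, which is $T$-provable by assumption. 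Intersecting over the relevant predicate classes then yields the ``$\subseteq$'' inclusions of both equalities.

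For the ``$\supseteq$'' direction and the remaining half of the ``Moreover'' clause, I would invoke Theorem \ref{thmECN} with $L = \mathsf{ECNP}$ to obtain a $\Sigma_1$ provability predicate $\PR_T^{\ast}(x)$ (namely the $\PR_{g_1}(x)$ constructed in that proof) together with an arithmetical interpretation $f^{\ast}$ based on it such that $\mathsf{ECNP} \vdash A$ iff $T \vdash f^{\ast}(A)$. By Claims \ref{ECN2}, \ref{ECN3}, and \ref{ECNP}, the predicate $\PR_T^{\ast}(x)$ satisfies $\mathbf{E}$, $\mathbf{C}$, and $T \vdash \Con^L_T$ respectively, so it lies in both intersection classes on the right-hand side. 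Hence whenever $A$ belongs to either intersection, in particular $A \in \PL(\PR_T^{\ast})$, so $T \vdash f^{\ast}(A)$, and therefore $\mathsf{ECNP} \vdash A$ by Theorem \ref{thmECN}.2. The ``Moreover'' clause follows at once: $\mathsf{ECNP} \subseteq \PL(\PR_T^{\ast})$ by the soundness argument of the previous paragraph, while $\PL(\PR_T^{\ast}) \subseteq \mathsf{ECNP}$ by exactly the implication just used via $f^{\ast}$.

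There is no genuine obstacle here. The only point requiring care is verifying that the concrete $\Sigma_1$ predicate $\PR_{g_1}(x)$ produced in the proof of Theorem \ref{thmECN} simultaneously meets all three conditions $\mathbf{E}$, $\mathbf{C}$, and $T \vdash \Con^L_T$ when $L = \mathsf{ECNP}$, and this is precisely the content of Claims \ref{ECN2}, \ref{ECN3}, and \ref{ECNP} taken together.
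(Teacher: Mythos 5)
Your proposal is correct and follows exactly the route the paper intends: soundness of $\mathsf{ECNP}$ under any predicate satisfying $\mathbf{E}$, $\mathbf{C}$, and $T \vdash \Con^L_T$ by induction on derivations, and the reverse inclusions via Theorem \ref{thmECN} with $L = \mathsf{ECNP}$, using Claims \ref{ECN2}, \ref{ECN3}, and \ref{ECNP} to place $\PR_{g_1}(x)$ in both intersection classes. The paper leaves this corollary's proof implicit, and your write-up supplies precisely the intended argument.
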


\section{Concluding remarks}\label{fur}
In this paper, we have investigated the condition $\mathbf{E}$ from a modal logical perspective.
In particular, we have proved the arithmetical completeness theorems for the logics $\EN$, $\ECN$, $\ENP$, $\END$, and $\ECNP$ by embedding neighborhood models into arithmetic.
Although we do not discuss the details in this paper, we remark here that for extensions of $\mathsf{EN}$ such as $\mathsf{MN}$, $\mathsf{K}$, and $\mathsf{KD}$,
whose arithmetical completeness has been established via relational semantics \cite{KK2,Kur18-2,Kur23,Kur20}, our approach based on neighborhood semantics can also be adapted to obtain alternative proofs of arithmetical completeness.

Since the conditions $\D{3}$ and $\mathbf{E}$ together yield the second incompleteness theorem for $\Con^S_T$, their combination is also a subject worth analyzing from the perspective of provability logic.
The extensions $\mathsf{EN4}$, $\mathsf{ENP4}$, and $\mathsf{ECN4}$ of $\EN$ are defined as follows:
\begin{itemize}
    \item $\mathsf{EN4}= \EN + (\Box A \to \Box \Box A)$,
    \item $\mathsf{ENP4}= \mathsf{EN4} + \neg \Box \bot$,
    \item $\mathsf{ECN4}= \mathsf{ECN} + (\Box A \to \Box \Box A)$.
\end{itemize}
The finite frame property of $\mathsf{EN4}$ and $\mathsf{ENP4}$ was proved in \cite{Kop}.
However, establishing the arithmetical completeness of these logics, as well as for $\mathsf{ECN4}$, involves technical difficulties. In particular, whether $\mathsf{EN4}$ is arithmetically complete or arithmetically incomplete remains unclear, that is, for a provability predicate $\PR_T(x)$ satisfying $\mathbf{E}$ and $\mathbf{D3}$, it is uncertain whether the fixed-point theorem produces additional modal principles over arithmetic. It is an interesting aspect of the present work that such technically challenging questions naturally arise.
\begin{prob}
For each $L \in \{ \mathsf{EN4}, \mathsf{ENP4}, \mathsf{ECN4}  \}$,  does there exist a provability predicate $\PR_T(x)$ such that  $\PL(\PR_T)=L$ ?  \end{prob}
Moreover, for each $L \in \{\mathsf{EN4}, \mathsf{ENP4}, \mathsf{ECN4} \}$, it remains open whether there exists a provability predicate $\PR_T(x)$ exactly corresponding to $L$, that is, we propose the following problem:
\begin{prob}
\leavevmode
\begin{itemize}
    \item Does there exist a provability predicate $\PR_T(x)$ such that $\PR_T(x)$ satisfies $\mathbf{E}$ and $\D{3}$, but not $\mathbf{C}$, $\mathbf{M}$, and $T \vdash \Con^L_T$.
    \item 
    Does there exist a provability predicate $\PR_T(x)$ such that $\PR_T(x)$ satisfies $\mathbf{E}$, $\D{3}$, and $T \vdash \Con^L_T$, but not $\mathbf{C}$ and $\mathbf{M}$.
    \item Does there exist a provability predicate $\PR_T(x)$ such that $\PR_T(x)$ satisfies $\mathbf{E}$, $\mathbf{C}$, and $\D{3}$, but not $\mathbf{M}$ and $T \vdash \Con^L_T$.
\end{itemize}
\end{prob}
This paper is part of a research project on the modal logical analysis of derivability conditions. Within this project, including the results of the present work, a number of results have been accumulated. The author and Kurahashi provided in \cite{KK3} an overview of these results and the remaining open problems. 
% Also, an overview of modal logical studies on derivability conditions and consistency statements is provided in \cite{KK3}.

\section*{Acknowledgments}
This work was supported by JST SPRING, Grant Number JPMJSP2148.
The author would like to thank Taishi Kurahashi for many valuable discussions and comments. 
\bibliographystyle{plain}
\bibliography{refs}

% \section*{Appendix: Provability predicate corresponding to  $\mathsf{ENP4}$}
% We construct a provability predicate corresponding satisfies $\mathbf{E}$, $\D{3}$, and $T \vdash \Con^L_T$, but fails $\mathbf{C}$ and $\mathbf{M}$.

\end{document}